\newtheorem{cor}{Corollary}
\newtheorem{thm}{Theorem}
\newtheorem{lem}{Lemma}
\newtheorem{prop}{Proposition}
\newtheorem{conj}{Conjecture}
\newtheorem{defi}{Definition}
\theoremstyle{remark}
\newtheorem{rmk}{Remark}
\theoremstyle{plain}
\theoremstyle{remark}
\numberwithin{equation}{section}
\begin{document}

\title{The Mean Value of $L(\tfrac{1}{2},\chi)$ in the Hyperelliptic Ensemble}

\author{J. C. Andrade}
\address{School of Mathematics, University of Bristol, Bristol BS8 1TW, UK}
\email{j.c.andrade@bristol.ac.uk}
\thanks{JCA was supported by an Overseas Research Scholarship and an University of Bristol Research Scholarship.}

\author{J. P. Keating}
\address{School of Mathematics, University of Bristol, Bristol BS8 1TW, UK}
\email{j.p.keating@bristol.ac.uk}
\thanks{JPK is sponsored by the Air Force Office of Scientific Research, Air Force Material Command, USAF, under grant number FA8655-10-1-3088. The U.S. Government is authorized to reproduce and distribute reprints for Governmental purpose notwithstanding any copyright notation thereon.}

\subjclass[2010]{11G20 (Primary), 11M50, 14G10 (Secondary)}
\keywords{moments of quadratic Dirichlet $L$--functions, finite fields, function fields, random matrix theory, hyperelliptic curves}

\begin{abstract}
We obtain an asymptotic formula for the first moment of quadratic Dirichlet $L$--functions over function fields at the central point $s=\tfrac{1}{2}$. Specifically, we compute the expected value of $L(\tfrac{1}{2},\chi)$ for an ensemble of hyperelliptic curves of genus $g$ over a fixed finite field as $g\rightarrow\infty$. Our approach relies on the use of the analogue of the approximate functional equation for such $L$--functions. The results presented here are the function field analogues of those obtained previously by Jutila in the number-field setting and are consistent with recent general conjectures for the moments of $L$--functions motivated by Random Matrix Theory.
\end{abstract}

\maketitle

\section{Introduction}
It is an important problem in analytic number theory to estimate moments of families of $L$--functions. For the classical Riemann zeta function the problem is to understand the asymptotic behaviour of
\begin{equation}
M_{k}(T)=\int_{0}^{T}|\zeta(\tfrac{1}{2}+it)|^{2k}dt.
\end{equation}

The leading order asymptotic for $M_{k}(T)$ is known just for $k=1$, due to Hardy and Littlewood \cite{HL}
\begin{equation}
M_{1}(T)\sim T\log T,
\end{equation} 
and for $k=2$, due to Ingham \cite{I}
\begin{equation}
M_{2}(T)\sim \frac{1}{2\pi^{2}}T\log^{4}T.
\end{equation}

For positive real $k$, it is conjectured that
\begin{equation}
M_{k}(T)\sim C_{k}T(\log T)^{k^{2}}
\end{equation}
for a positive constant $C_{k}$. Using random matrix theory, Keating and Snaith \cite{KeS1} conjectured a precise value for $C_{k}$ for all $k>0$ and more generally for $\mathfrak{R}(k)>-\tfrac{1}{2}$. The conjectures about moments of the Riemann zeta function and other $L$--functions were developed in \cite{KeS2}, \cite{CFKRS} and \cite{CFKRS2}, where they were extended to include all the principal lower order terms in the asymptotics; for example, when $k$ is a positive integer $\tfrac{1}{T}M_{k}(T)$ is given, conjecturally, as a polynomial of degree $k^{2}$ in $\log T$ with a reminder that vanishes like a power of $T$.

Considering the family of Dirichlet $L$--functions $L(s,\chi_{d})$, with $\chi_{d}$ a real primitive Dirichlet character modulo $d$ defined by the Jacobi symbol $\chi_{d}(n)=\left(\frac{d}{n}\right)$, the problem of mean values is to understand the asymptotic behavior of
\begin{equation}
\sum_{0<d\leq D}L(\tfrac{1}{2},\chi_{d})^{k},
\end{equation}
as $D\rightarrow\infty$. In this context Jutila \cite{J} proved that

\begin{multline}\label{eq:jutilaf}
\sum_{0<d\leq D}L(\tfrac{1}{2},\chi_{d})\\ =\frac{P(1)}{4\zeta(2)}D\left\{\log(D/\pi)+\frac{\Gamma^{'}}{\Gamma}(1/4)+4\gamma-1+4\frac{P^{'}}{P}(1)\right\}+O(D^{3/4+\varepsilon})
\end{multline}
where
$$P(s)=\prod_{p}\left(1-\frac{1}{(p+1)p^{s}}\right),$$
and
\begin{equation}
\sum_{0<d\leq D}L(\tfrac{1}{2},\chi_{d})^{2}=\frac{c}{\zeta(2)}D\log^{3}D+O(D(\log D)^{5/2+\varepsilon})
\end{equation}
with
$$c=\frac{1}{48}\prod_{p}\left(1-\frac{4p^{2}-3p+1}{p^{4}+p^{3}}\right).$$
Restricting $d$ to be odd, square--free and positive, so that $\chi_{8d}$ are real, primitive characters with conductor $8d$ and with $\chi_{8d}(-1)=1$, Soundararajan \cite{S} proved that
\begin{multline}
\frac{1}{D^{*}}\sideset{}{^*}\sum_{0<d\leq D}L(\tfrac{1}{2},\chi_{8d})^{3}\\
\sim\frac{1}{184320}\prod_{p\geq3}\left(1-\frac{12p^{5}-23p^{4}+23p^{3}-15p^{2}+6p-1}{p^{6}(p+1)}\right)(\log D)^{6},
\end{multline}
where the sum $\sum\nolimits^{*}$ over $d$ indicates that $d$ is odd and square--free, and $D^{*}$ is the number of such $d$ in $(0,D]$.  Note that \eqref{eq:jutilaf} includes all lower order terms in the sense of \cite{CFKRS}, in that the error term is $o(D)$.

Keating and Snaith \cite{KeS2} put forward the following conjecture for the leading order asymptotic for mean values of $L$--functions.

\begin{conj}
For $k$ fixed with $\mathfrak{R}(k)\geq0$, as $D\rightarrow\infty$
\begin{equation}
\frac{1}{D^{*}}\sideset{}{^{*}}\sum_{0<d\leq D}L(\tfrac{1}{2},\chi_{8d})^{k}\sim a_{k,Sp}\frac{G(k+1)\sqrt{\Gamma(k+1)}}{\sqrt{G(2k+1)\Gamma(2k+1)}}(\log D)^{k(k+1)/2}
\end{equation}
where
$$a_{k,Sp}=2^{-k(k+2)/2}\prod_{p\geq3}\frac{(1-\frac{1}{p})^{k(k+1)/2}}{1+\frac{1}{p}}\left(\frac{(1-\frac{1}{\sqrt{p}})^{-k}+(1+\frac{1}{\sqrt{p}})^{-k}}{2}+\frac{1}{p}\right)$$
and $G(z)$ is Barnes' $G$--function.
\end{conj}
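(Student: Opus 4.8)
This statement is a conjecture, so in place of a proof I would reconstruct the random-matrix heuristic of Keating and Snaith \cite{KeS1,KeS2}, together with the arithmetic ``recipe'' of Conrey, Farmer, Keating, Rubinstein and Snaith \cite{CFKRS}. The derivation factors cleanly into a universal \emph{random matrix} part, which accounts for $\frac{G(k+1)\sqrt{\Gamma(k+1)}}{\sqrt{G(2k+1)\Gamma(2k+1)}}$ and the exponent $k(k+1)/2$ of $\log D$, and an \emph{arithmetic} part, which accounts for $a_{k,Sp}$.

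For the random matrix part I would begin by fixing the symmetry type. Since every $\chi_{8d}$ is real the completed $L$--function is self--dual, and in the Katz--Sarnak framework the family $\{L(s,\chi_{8d})\}_d$ has unitary symplectic symmetry; one therefore models $L(\tfrac12,\chi_{8d})$ by the value $\Lambda_A(1)$ of the characteristic polynomial of a Haar--random $A\in USp(2N)$ at the symmetry point. The moments $\mathbb{E}_{USp(2N)}[\Lambda_A(1)^k]$ are then evaluated exactly: the Weyl integration formula turns the average into a Selberg--type integral over the eigenangles, this integral evaluates to a ratio of products of Gamma functions, and a Barnes $G$--function asymptotic analysis of that product as $N\to\infty$ yields
\begin{equation*}
\mathbb{E}_{USp(2N)}[\Lambda_A(1)^k]\sim \frac{G(k+1)\sqrt{\Gamma(k+1)}}{\sqrt{G(2k+1)\Gamma(2k+1)}}\,N^{k(k+1)/2}.
\end{equation*}
The size $N$ is then pinned down by matching the mean density of eigenangles near $\theta=0$ to the mean density of low--lying zeros of $L(s,\chi_{8d})$ with conductor $8d\le 8D$, which forces $N\sim\tfrac12\log D$; substituting this reproduces the power $(\log D)^{k(k+1)/2}$ and contributes the factor $2^{-k(k+1)/2}$ that sits inside $a_{k,Sp}$.

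For the arithmetic factor I would run the recipe on the moment sum. Using the approximate functional equation for $L(s,\chi_{8d})$, expand $L(\tfrac12,\chi_{8d})^k$ as a smoothly truncated Dirichlet polynomial $\sum_n d_k(n)\,\chi_{8d}(n)\,n^{-1/2}$ (with $d_k$ the coefficients of $\zeta^k$) plus its dual, average term by term over $d$, and use that $\tfrac{1}{D^{*}}\sum^{*}_{d\le D}\chi_{8d}(n)$ is asymptotically $\prod_{p\mid n}(1+1/p)^{-1}$ when $n$ is a perfect square and is negligible otherwise. Retaining this diagonal contribution, the sum over square $n$ factors into an Euler product whose polar part at $s=1$ reproduces the power of $\log D$ already found, and whose regularised remainder is exactly $\prod_{p\ge3}\frac{(1-1/p)^{k(k+1)/2}}{1+1/p}\big(\tfrac12((1-p^{-1/2})^{-k}+(1+p^{-1/2})^{-k})+1/p\big)$; the excluded prime $p=2$ and the normalisation by the conductor $8d$ supply the remaining power of $2$, and together these assemble $a_{k,Sp}$.

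Finally I would splice the two halves together in the usual way, inserting $a_{k,Sp}$ as the family--dependent constant multiplying the symplectic random--matrix prediction with $N=\tfrac12\log D$, thereby obtaining the stated asymptotic. The principal obstacle --- and the reason this remains conjectural --- is that the model is only heuristic: there is no rigorous control of the off--diagonal terms in the shifted moment sum for $k\ge2$, so proving that the recipe is asymptotically exact is open in the number--field setting. (Over function fields the corresponding statements can in favourable cases be proved unconditionally; the first--moment result established in this paper is the prototype.) Even the arithmetic--factor step is not purely formal: passing from the Dirichlet--series identity to the asymptotic with the precise constant requires a Tauberian or contour argument and careful bookkeeping of the local factors.
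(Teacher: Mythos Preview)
The paper offers no proof of this statement: it is recorded as Conjecture~1, attributed to Keating and Snaith \cite{KeS2}, and serves only as motivating background for the function--field result. You correctly recognise this and, in lieu of a proof, sketch the standard heuristic derivation (symplectic random--matrix moment plus the diagonal/recipe computation of the arithmetic factor), which is the appropriate thing to do here. There is nothing in the paper to compare against beyond the bare citation, and your outline is faithful to the sources \cite{KeS2,CFKRS}.

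One small bookkeeping remark: you write that the calibration $N\sim\tfrac12\log D$ contributes the factor $2^{-k(k+1)/2}$ ``that sits inside $a_{k,Sp}$'', but the power of $2$ in the stated $a_{k,Sp}$ is $2^{-k(k+2)/2}$. You do say that the excluded prime $p=2$ and the conductor normalisation supply the remaining powers of $2$, so the discrepancy is accounted for in words; just be aware that if you ever write out the derivation in detail, the split of the total power $2^{-k(k+2)/2}$ between the $N\leftrightarrow\log D$ calibration and the local factor at $2$ needs to be tracked carefully (and indeed different authors package these powers of $2$ differently).
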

Conjectures for the lower order terms are given in \cite{CFKRS}.  These conjectures coincide with the results listed above.

The main result of this paper, Theorem \ref{thm:mainthm}, can be seen as the function field analogue of Jutila's result \eqref{eq:jutilaf}, in that it constitutes an asymptotic formula for the first moment of quadratic $L$--functions over function fields that includes the lower order terms, in the sense of \cite{CFKRS}.

\section{Statement of Results}
Let $\mathbb{F}_{q}$ be a fixed finite field of odd cardinality and $A=\mathbb{F}_{q}[x]$ be the polynomial ring over $\mathbb{F}_{q}$ in the variable $x$. Let $C$ be any smooth, projective, geometrically connected curve of genus $g\geq1$ defined over the finite field $\mathbb{F}_{q}$. The zeta function of the curve $C$ was introduced by Artin \cite{A} and is defined as

\begin{equation}
Z_{C}(u):=\exp\left(\sum_{n=1}^{\infty}N_{n}(C)\frac{u^{n}}{n}\right), \ \ \ \ \ |u|<1/q
\end{equation}
where $N_{n}(C):=\mathrm{Card}(C(\mathbb{F}_{q}))$ is the number of points on $C$ with coordinates in a field extension $\mathbb{F}_{q^{n}}$ of $\mathbb{F}_{q}$ of degree $n\geq1$. It was shown by Weil \cite{W} that the zeta function associated to $C$ is a rational function of the form

\begin{equation}\label{eq:zetaC}
Z_{C}(u)=\frac{P_{C}(u)}{(1-u)(1-qu)},
\end{equation}
where $P_{C}(u)\in\mathbb{Z}[u]$ is a polynomial of degree $2g$, with $P_{C}(0)=1$, and that it satisfies the functional equation

\begin{equation}\label{eq:funceq}
P_{C}(u)=(qu^{2})^{g}P_{C}\left(\frac{1}{qu}\right).
\end{equation}
By the Riemann Hypothesis for curves over finite fields, proved by Weil \cite{W}, one knows that the zeros of $P_{C}(u)$ all lie on the circle $|u|=q^{-1/2}$, i.e.,
$$P_{C}(u)=\prod_{j=1}^{2g}(1-\alpha_{j}u), \ \ \ \ \ \mathrm{with} \ \ |\alpha_{j}|=\sqrt{q} \ \ \mathrm{for \ all}\ j.$$

Our main goal is to establish an asymptotic formula for the average value of $P_{C}(u)$ at the central point $u=1/\sqrt{q}$ as we vary $C$ in a family of hyperelliptic curves of increasing genus $g$ defined over $\mathbb{F}_{q}$ where $q$ is fixed and assumed to be odd. To establish the result we choose the particular family, denoted by $\mathcal{H}_{2g+1,q}$, of all hyperellipic curves given in affine form by

$$C_{D}:y^{2}=D(x)$$
where

$$D(x)=x^{2g+1}+a_{2g}x^{2g}+\cdots+a_{1}x+a_{0}\in\mathbb{F}_{q}[x]$$
is a square--free, monic polynomial of degree $2g+1$. The curve $C_{D}$ is thus nonsingular and of genus $g$ and the family is denoted by
$$\mathcal{H}_{2g+1,q}=\{D \ \mathrm{monic}, \ \mathrm{deg}(D)=2g+1, \ D \ \mathrm{square\ free}, \ D\in\mathbb{F}_{q}[x]\}.$$

In this paper we establish an asymptotic formula for

\begin{equation}\label{eq:2.4}
\sum_{D\in\mathcal{H}_{2g+1,q}}P_{C_{D}}(q^{-1/2})
\end{equation}
as $g\rightarrow\infty$. 

\begin{thm}
\label{thm:mainthm}
Let $q$ be the fixed cardinality of the ground field $\mathbb{F}_{q}$ and assume for simplicity that $q\equiv 1\pmod{4}$. Then

\begin{multline}
\sum_{D\in\mathcal{H}_{2g+1,q}}P_{C_{D}}(q^{-1/2})\\
=\frac{P(1)}{2\zeta_{A}(2)}|D|\left\{\log_{q}|D|+1+\frac{4}{\log q}\frac{P^{'}}{P}(1)\right\}+O\left(|D|^{3/4+\frac{\log_{q}2}{2}}\right),
\end{multline}
where
\begin{equation}
P(s)=\prod_{\substack{P \ \mathrm{monic} \\ \mathrm{irreducible}}}\left(1-\frac{1}{(|P|+1)|P|^{s}}\right),
\end{equation}
$|f|=q^{\mathrm{deg}f}$
for any polynomial $f\in\mathbb{F}_{q}[x]$ (so $|D|=q^{2g+1}$), and
\begin{equation}
\zeta_{A}(s)=\frac{1}{1-q^{1-s}}
\end{equation}
is the zeta function associated to $A=\mathbb{F}_{q}[x]$.
\end{thm}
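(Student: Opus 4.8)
The plan is to interpret $P_{C_D}(u)$ arithmetically as a quadratic Dirichlet $L$-function, fold it with the functional equation \eqref{eq:funceq} into an approximate functional equation, interchange the order of summation, and then extract a principal term via quadratic reciprocity in $\mathbb{F}_q[x]$ while controlling the remaining character sums by Weil's Riemann Hypothesis for curves. Concretely, for $D\in\mathcal H_{2g+1,q}$ one has $P_{C_D}(u)=\prod_{P\nmid D}\bigl(1-\chi_D(P)u^{\deg P}\bigr)^{-1}=\sum_{n=0}^{2g}A_n(D)\,u^n$, where $\chi_D=\bigl(\tfrac{D}{\cdot}\bigr)$ and $A_n(D)=\sum_{\deg f=n,\,f\ \mathrm{monic}}\chi_D(f)$, and the value at $u=q^{-1/2}$ is exactly $L(\tfrac12,\chi_D)$. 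The functional equation gives $A_n(D)=q^{\,n-g}A_{2g-n}(D)$, so the numbers $q^{-n/2}A_n(D)$ are invariant under $n\mapsto 2g-n$, whence
$$P_{C_D}(q^{-1/2})=2\sum_{n=0}^{g-1}q^{-n/2}A_n(D)+q^{-g/2}A_g(D).$$
Summing over $D$ and interchanging summations reduces the theorem to evaluating $\sum_{\deg f=n}\sum_{D\in\mathcal H_{2g+1,q}}\chi_D(f)$ for $0\le n\le g$.

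To evaluate that inner sum I would detect the square-free condition by $\mu^2(D)=\sum_{l^2\mid D}\mu(l)$, rewriting it as a sum over monic $D=l^2E$ of degree $2g+1$, and then use quadratic reciprocity — here the hypothesis $q\equiv1\pmod4$ forces the reciprocity sign to be identically $+1$ — to replace $\chi_D(f)$ by $\bigl(\tfrac{f}{D}\bigr)$; the sum becomes $\sum_{\gcd(l,f)=1}\mu(l)\sum_{\deg E=2g+1-2\deg l}\bigl(\tfrac{f}{E}\bigr)$. I would then separate the cases according to whether $f$ is a perfect square. If $f=h^2$, then $\bigl(\tfrac{f}{E}\bigr)=\mathbf 1[\gcd(E,h)=1]$ and the inner sum is the explicit count of monic polynomials of a prescribed degree coprime to $h$; this branch produces the principal term. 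If $f$ is not a perfect square, writing $f=f_1f_2^2$ with $f_1$ square-free of positive degree, an additional M\"obius expansion of the coprimality with $f_2$ reduces the inner sum to sums of the shape $\sum_{\deg E'=m'}\chi_{f_1}(E')$ for the primitive quadratic character $\chi_{f_1}$; these are coefficients of the polynomial $L(u,\chi_{f_1})$ of degree $\deg f_1-1$, which by Weil's theorem vanish once $m'\ge\deg f_1$ and satisfy $\bigl|\sum_{\deg E'=m'}\chi_{f_1}(E')\bigr|\le\binom{\deg f_1-1}{m'}q^{m'/2}$ in general, so this branch feeds only the error term.

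For the principal term I would substitute $\#\{E\ \mathrm{monic}:\deg E=m,\ \gcd(E,h)=1\}=q^{m}\prod_{P\mid h}(1-|P|^{-1})+O(d(h))$ (with $d$ the divisor function), carry out the sum over $l$ — which collapses to $\prod_{P\nmid h}(1-|P|^{-2})=\zeta_A(2)^{-1}\prod_{P\mid h}(1-|P|^{-2})^{-1}$ — and then the sum over monic $h$ of fixed degree, which reassembles into the coefficients of the Euler product $\prod_P\frac{1-u^{\deg P}/(|P|+1)}{1-u^{\deg P}}$. The key point is that, in the variable $u=q^{-s}$, the numerator $\prod_P\bigl(1-u^{\deg P}/(|P|+1)\bigr)$ is precisely $P(s)$, so this Euler product equals $P(s)\zeta_A(s)$: it is analytic near $u=1/q$ except for the simple pole coming from $(1-qu)^{-1}$, and its behaviour there is governed by $P(1)$ and $\tfrac{P'}{P}(1)$. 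The remaining sum over $n$ is then a partial sum, up to an index of size $\sim g$, of the Taylor coefficients of a function with a simple pole at $u=1/q$; such a partial sum grows linearly in the cutoff, and tracking the parity of $n$, the exact upper endpoint of the range, and the separate $n=g$ term yields exactly $\frac{P(1)}{2\zeta_A(2)}|D|\bigl(\log_q|D|+1+\tfrac{4}{\log q}\tfrac{P'}{P}(1)\bigr)$.

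The main obstacle is the error term. The non-square $f$ must be controlled entirely through the Riemann Hypothesis bound $\bigl|\sum_{\deg E'=m'}\chi_{f_1}(E')\bigr|\le\binom{\deg f_1-1}{m'}q^{m'/2}$, applied uniformly over square-free $f_1$ of every degree up to $g$ and over the shift parameter $l$; summing the binomial coefficient over the admissible range of $m'$ produces a factor $2^{\deg f_1}\le 2^{g}$, and this is exactly where the $|D|^{(\log_q2)/2}=2^{(2g+1)/2}$ in the stated error comes from. One then has to check that the resulting bound, of order $q^{3g/2}2^{g}$, is genuinely smaller than the main term, which is of order $g\,q^{2g+1}$: this requires $2<\sqrt q$, i.e.\ $q>4$, and here the assumption $q\equiv1\pmod4$ conveniently forces $q\ge5$. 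Finally one must collect the subordinate errors — the $O(d(h))$ in the coprime count, the truncation of the sum over $l$ at $\deg l\le g$, and the exponentially small tails incurred in completing the Euler products and the $n$-sum to infinity — and show them to be of the same order or smaller; throughout, one must take care that $f$ need not be square-free, so that it is $\chi_{f_1}$ rather than $\chi_f$ that is the primitive character in play, and keep track of the auxiliary coprimality conditions with $f_2$.
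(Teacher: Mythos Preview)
Your proposal is correct and follows the same strategy as the paper: fold $P_{C_D}(q^{-1/2})$ via the functional equation into two sums of length $\le g$ (the paper's Lemma~\ref{lem:funceq}), split the $f$-sum into squares and non-squares, extract the main term from the squares via coprime-polynomial counts (Propositions~\ref{prop:1}--\ref{prop:2}), and bound the non-square contribution by the Weil estimate (Lemmas~\ref{lem:13}--\ref{lem:14} and Proposition~\ref{prop:4}). Your treatment differs only in packaging---you pass through reciprocity explicitly where the paper simply uses multiplicativity of $(D/f)$ in the top variable, you organise the main-term calculation around the generating function $P(s)\zeta_A(s)$ where the paper unwinds the same Euler product by hand (Lemmas~\ref{lem:7}--\ref{lem:12}, Proposition~\ref{prop:3}), and you reduce to the primitive character $\chi_{f_1}$ for the error where the paper applies its Lemma~\ref{lem:13} directly to the possibly imprimitive character $(\,\cdot\,/f)$---but these are cosmetic.
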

Comparing \eqref{eq:jutilaf} with Theorem \ref{thm:mainthm}, one sees clearly the analogy between function fields and the number-field result.

\begin{cor}\label{cor:1}
Under the same assumptions of Theorem \ref{thm:mainthm} we have,

\begin{equation}
\frac{1}{\#\mathcal{H}_{2g+1,q}}\sum_{D\in\mathcal{H}_{2g+1,q}}P_{C_{D}}(q^{-1/2})\sim \frac{1}{2}P(1)(\log_{q}|D|)=\frac{1}{2}P(1)(2g+1)
\end{equation}
as $g\rightarrow\infty$.
\end{cor}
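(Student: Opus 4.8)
The plan is to obtain Corollary~\ref{cor:1} as an immediate consequence of Theorem~\ref{thm:mainthm}, the only extra ingredient being a closed form for the cardinality $\#\mathcal{H}_{2g+1,q}$. First I would recall the standard count of monic square-free polynomials over $\mathbb{F}_{q}$. Using unique factorisation, the relevant generating function is the Euler product $\prod_{P}(1+|P|^{-s})=\zeta_{A}(s)/\zeta_{A}(2s)$, and since $\zeta_{A}(s)=(1-q^{1-s})^{-1}$ this equals $(1-q^{1-2s})/(1-q^{1-s})$. Expanding in powers of $q^{-s}$ and reading off the coefficient of $q^{-(2g+1)s}$ (note $2g+1\geq 3$) gives
\[
\#\mathcal{H}_{2g+1,q}=q^{2g+1}-q^{2g}=|D|\Bigl(1-\tfrac{1}{q}\Bigr);
\]
the same formula can of course be checked directly by inclusion--exclusion over square divisors.

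The key observation is then that $1-\tfrac{1}{q}=\zeta_{A}(2)^{-1}$, because $\zeta_{A}(2)=(1-q^{-1})^{-1}$, so that $\#\mathcal{H}_{2g+1,q}=|D|/\zeta_{A}(2)$ --- precisely the prefactor occurring in the main term of Theorem~\ref{thm:mainthm}. Dividing the formula of Theorem~\ref{thm:mainthm} by $\#\mathcal{H}_{2g+1,q}$, the factors $|D|$ and $\zeta_{A}(2)^{-1}$ cancel, leaving
\[
\frac{1}{\#\mathcal{H}_{2g+1,q}}\sum_{D\in\mathcal{H}_{2g+1,q}}P_{C_{D}}(q^{-1/2})
=\frac{P(1)}{2}\Bigl\{\log_{q}|D|+1+\frac{4}{\log q}\frac{P^{'}}{P}(1)\Bigr\}+O\bigl(|D|^{-1/4+\frac{\log_{q}2}{2}}\bigr).
\]

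It remains to see that everything but $\tfrac{1}{2}P(1)\log_{q}|D|$ is of lower order. Since $\log_{q}|D|=2g+1\to\infty$ while $1+\frac{4}{\log q}\frac{P^{'}}{P}(1)=O(1)$, the bracketed constant is $o(\log_{q}|D|)$; and the error exponent $-\tfrac{1}{4}+\tfrac{\log_{q}2}{2}$ is strictly negative for all $q\geq 5$, hence for every $q\equiv 1\pmod 4$, so the error term is $o(1)$. Therefore the left-hand side is asymptotic to $\tfrac{1}{2}P(1)\log_{q}|D|=\tfrac{1}{2}P(1)(2g+1)$, as claimed. I do not anticipate any genuine obstacle here: all the analytic work is in Theorem~\ref{thm:mainthm}, and the only point worth isolating is the clean identity $\#\mathcal{H}_{2g+1,q}=|D|/\zeta_{A}(2)$, which is what makes the normalisation collapse so neatly.
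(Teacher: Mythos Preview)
Your proposal is correct and follows exactly the paper's own approach: the paper states that the corollary ``is immediate using \eqref{eq:3.17} and computing the limit as $g\rightarrow\infty$,'' where \eqref{eq:3.17} is precisely your identity $\#\mathcal{H}_{2g+1,q}=|D|/\zeta_{A}(2)$. Your write-up simply spells out the details (including the check that the error exponent is negative for $q\equiv 1\pmod 4$) that the paper leaves implicit.
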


It seems likely that the calculations presented in this paper can be extended to establish the corresponding asymptotic formula for the second power moment of Dirichlet $L$--functions over the rational function field, and possibly for the third power moment also, in the same way that for the classical quadratic $L$--functions Jutila established the second moment and Soundararajan the third moment.

Previously, J. Hoffstein and M. Rosen \cite{HR} obtained an asymptotic formula for the first moment of Dirichlet $L$--functions over function fields making use of Eisentein series for the metaplectic two--fold cover of $GL(2, k_ {\infty})$.  They considered the sum over all square-free polynomials of a prescribed degree. One important difference between Hoffstein and Rosen's result and ours is that we sum over square--free and monic polynomials, which means that we are averaging over positive and fundamental discriminants in this setting.  The two results have the same general form, but are different in their details.  Our calculation is complementary to that developed in \cite{HR}, being more similar to the classical methods employed in \cite{J}.

In a recent paper A. Bucur and A. Diaconu \cite{BucurMMJ} established the following result:

\begin{thm}[Bucur, Diaconu]\label{thm:BD}As $q\rightarrow\infty$, we have

$$\sum_{\substack{d\in A \\ d \ \mathrm{monic} \\ \mathrm{deg}(d)=2g}}L(\tfrac{1}{2},\chi_{d})^{4}\sim\frac{g(1+g)^{2}(2+g)^{2}(3+g)(1+2g)(3+2g)^{2}(5+2g)}{75600}q^{2g}.$$

\end{thm}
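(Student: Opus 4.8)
The plan is to exploit the fact that the limit is $q\to\infty$ with $g$ fixed, which is exactly the regime where the Katz--Sarnak philosophy makes the moment converge to an average over the monodromy group of the family. First I would rewrite the central value spectrally. For square--free monic $d$ the numerator of the zeta function of $C_d\colon y^2=d(x)$ factors as $\prod_j(1-\alpha_j u)$ with $|\alpha_j|=\sqrt q$, so writing $\alpha_j=\sqrt q\,e^{i\theta_j}$ and $\Theta_d=\mathrm{diag}(e^{i\theta_j})$ gives $L(\tfrac12,\chi_d)=\det(I-\Theta_d)$, where $\Theta_d\in\mathrm{USp}(2N)$ and $2N$ is the degree of the relevant $L$--polynomial. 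For $\deg d=2g$ the even--degree functional equation (carrying a trivial factor at the place at infinity) forces $2N=2g-2$, i.e. $\Theta_d\in\mathrm{USp}(2(g-1))$, as confirmed by the checks below. I would discard the non--square--free $d$ at the outset: they number $q^{2g-1}$, and since $|\det(I-\Theta_d)|\le 2^{2N}$ is bounded for fixed $g$, they contribute $O_g(q^{2g-1})$, which is lower order.

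Next I would invoke the big--monodromy theorem of Katz and Sarnak, that the geometric monodromy group of this hyperelliptic family is the full symplectic group $\mathrm{USp}(2(g-1))$. By Deligne's equidistribution theorem the classes $\{\Theta_d\}_{d\ \mathrm{sq.free}}$ then equidistribute with respect to Haar measure as $q\to\infty$, with a power--saving error. Because $\det(I-U)^4$ is a fixed symmetric polynomial in the eigenvalues (independent of $q$), I would expand it in the irreducible characters of $\mathrm{USp}(2(g-1))$; equidistribution sends the family average of every nontrivial character to $0$ and isolates the multiplicity of the trivial representation, so that
\[
\frac{1}{\#\{d\ \mathrm{sq.free}\}}\sum_{d\ \mathrm{sq.free}}\det(I-\Theta_d)^4=\int_{\mathrm{USp}(2(g-1))}\det(I-U)^4\,dU+O(q^{-1/2}).
\]
Combined with $\#\{d\ \mathrm{sq.free},\ \mathrm{monic},\ \deg=2g\}=q^{2g}-q^{2g-1}\sim q^{2g}$, this gives $\sum_d L(\tfrac12,\chi_d)^4\sim q^{2g}\int_{\mathrm{USp}(2(g-1))}\det(I-U)^4\,dU$.

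It then remains to evaluate the symplectic matrix integral. Using the Weyl integration formula and $\det(I-U)=\prod_j 4\sin^2(\theta_j/2)$, this becomes the Selberg--type integral
\[
\int_{\mathrm{USp}(2N)}\det(I-U)^4\,dU=c_N\,4^{4N}\int_{[0,\pi]^N}\prod_{j}\sin^{8}\!\Big(\tfrac{\theta_j}{2}\Big)\prod_{i<j}(\cos\theta_i-\cos\theta_j)^2\prod_{j}\sin^2\theta_j\,d\theta,
\]
which I would evaluate in closed form via the Keating--Snaith symplectic moment formula. For the fourth power this is a polynomial in $N$ of degree $k(k+1)/2=10$; setting $N=g-1$ and simplifying should reproduce $\frac{g(1+g)^2(2+g)^2(3+g)(1+2g)(3+2g)^2(5+2g)}{75600}$. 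As a check, at $g=1$ the group is $\mathrm{USp}(0)$ and the integral is $1$, while at $g=2$ a direct one--variable computation over $\mathrm{USp}(2)$ gives $16\cdot\frac{2}{\pi}\int_0^\pi(1-\cos\theta)^4\sin^2\theta\,d\theta=42$; both match the stated polynomial.

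The main obstacle is the input on which everything rests: the determination that the geometric monodromy is the full symplectic group, together with a quantitative (power--saving in $q$) form of equidistribution, so that the averages of the nontrivial symplectic characters genuinely vanish in the limit. A secondary nuisance is the even--degree bookkeeping --- correctly stripping the trivial factor at infinity and the sign of the functional equation so that $L(\tfrac12,\chi_d)$ is identified with $\det(I-\Theta_d)$ on $\mathrm{USp}(2(g-1))$, and confirming the non--square--free locus is negligible. An alternative, more hands--on route avoids monodromy altogether: expand $L(\tfrac12,\chi_d)^4$ as a sum over four--tuples $(f_1,f_2,f_3,f_4)$, interchange the order of summation, and evaluate $\sum_d\chi_d(f_1f_2f_3f_4)$ by orthogonality --- the diagonal $f_1f_2f_3f_4=\square$ contributes $\sim q^{2g}$ while the off--diagonal exhibits square--root cancellation and is lower order in $q$ --- after which one evaluates the resulting combinatorial diagonal sum; there the main difficulty shifts to extracting the exact degree--$10$ polynomial from that count.
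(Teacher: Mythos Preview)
The paper does not prove this theorem at all; it is quoted as a result of Bucur and Diaconu, with the remark that their argument ``make[s] use of Multiple Dirichlet Series and the Weyl group action of a particular Kac--Moody algebra.'' So there is no in--paper proof to compare against, and the method of the original authors is structurally quite different from what you propose.

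Your route via Katz--Sarnak equidistribution is a legitimate alternative and is essentially the argument the paper alludes to in Section~3.4 when it says that in the $q\to\infty$ limit ``the determination of the moments of $P_C(u)$ becomes a purely Random Matrix Theory calculation.'' The identification $2N=2(g-1)$ for $\deg d=2g$ is correct (the curve $y^2=d(x)$ has genus $g-1$), your handling of the trivial factor $(1-q^{-1/2})\to 1$ is fine, and your numerical checks at $g=1,2$ confirm the matrix integral matches the stated polynomial. What your approach buys is conceptual transparency: once equidistribution is granted, the fourth moment is literally $\int_{\mathrm{USp}(2(g-1))}\det(I-U)^4\,dU$, and the Keating--Snaith formula finishes. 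What the Bucur--Diaconu multiple Dirichlet series approach buys is that it does not rest on Deligne--Katz--Sarnak monodromy input; it produces the asymptotic by analysing a generating series directly, and is better suited to extracting lower--order or uniform information.

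Two small caveats on your write--up. First, your bound $|\det(I-\Theta_d)|\le 2^{2N}$ for the non--square--free $d$ is phrased as if such $d$ still carry a unitary Frobenius class in $\mathrm{USp}(2N)$; they do not, since $\chi_d$ is imprimitive. The correct argument writes $d=d_0 m^2$ with $d_0$ square--free and uses $L(s,\chi_d)=L(s,\chi_{d_0})\prod_{P\mid m,\,P\nmid d_0}(1-\chi_{d_0}(P)|P|^{-s})$, after which the Riemann Hypothesis for $d_0$ and the trivial bound on the finite product give $|L(\tfrac12,\chi_d)|=O_g(1)$, so your $O_g(q^{2g-1})$ conclusion stands. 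Second, the Katz--Sarnak equidistribution statement in this paper is formulated for the odd--degree family $\mathcal{H}_{2g+1,q}$; the even--degree family you need requires its own monodromy computation (still full symplectic, but with the two points at infinity handled), which you correctly flag as ``even--degree bookkeeping.''
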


Theorem \ref{thm:BD} is the fourth power moment for Quadratic Dirichlet $L$--Functions as $q\rightarrow\infty$ and $g$ is fixed. This is therefore the opposite limit to that which we consider in this paper. To establish the result above Bucur and Diaconu make use of Multiple Dirichlet Series and the Weyl group action of a particular Kac-Moody algebra. 

The calculations presented here to establish an asymptotic formula for the first moment of quadratic Dirichlet $L$--functions over the rational function field are based on elementary estimates and techniques which are in the spirit of those used by Faifman and Rudnick \cite{FR}, Kurlberg and Rudnick \cite{KR} and Rudnick \cite{Ru}.  Specifically, we make use of the analogue of the approximate functional equation for $L$--functions over function fields. It is important to emphasize that the limit we consider here is $q$ is fixed and $g\rightarrow\infty$, rather than the limit $q\rightarrow\infty$ and $g$ fixed. In the latter case Katz and Sarnak \cite{KS1, KS2} established that the conjugacy classes $\{\Theta_{C}:C\in\mathcal{H}_{2g+1}\}$ become uniformly distributed in $USp(2g)$ in the limit $q\rightarrow\infty$, and so the determination of the moments of $P_{C}(u)$ becomes a purely Random Matrix Theory calculation. In this context we note explicitly that in many of our estimates (e.g. when we use the $O$ and $\ll$ notations) the implied constant may depend on $q$.

\section{Preliminaries on Quadratic $L$--Functions and Dirichlet Characters for Function Fields.}

We begin by presenting some background on the zeta functions associated with hyperelliptic curves. The theory was initiated by E. Artin \cite{A}.  For a general reference, see \cite{Ro}.

\subsection{Basic facts about $\mathbb{F}_{q}[x]$}
We define the norm of a polynomial $f\in\mathbb{F}_{q}[x]$ in the following way. For $f\neq0$, set $|f|:=q^{\mathrm{deg}f}$ and if $f=0$, set $|f|=0$. A monic irreducible polynomial is called a ``prime" polynomial.

The zeta function of $A=\mathbb{F}_{q}[x]$, denoted by $\zeta_{A}(s)$, is defined by the infinite series

\begin{equation}\label{eq:zetaA}
\zeta_{A}(s):=\sum_{\substack{f\in A \\ f \ \mathrm{monic}}}\frac{1}{|f|^{s}}=\prod_{\substack{P \ \mathrm{monic} \\ \mathrm{irreducible}}}\left(1-|P|^{-s}\right)^{-1}, \ \ \ \ \ \ \mathfrak{R}(s)>1
\end{equation}
which is
\begin{equation}\label{eq:zetaA1}
\zeta_{A}(s)=\frac{1}{1-q^{1-s}}.
\end{equation}
We can also define the analogue of the Mobius function $\mu(f)$ and the Euler totient function $\Phi(f)$ for $A=\mathbb{F}_{q}[x]$ as follows:
\begin{equation}\label{eq:3.3}
\mu(f)=\left\{
\begin{array}{rcl}
(-1)^{t}, & f=\alpha P_{1}P_{2}\ldots P_{t},\\
0, & \mathrm{otherwise},\\
\end{array}
\right.
\end{equation}
where each $P_{j}$ is a distinct monic irreducible, and
\begin{equation}
\Phi(f)=\sum_{\substack{g \ \mathrm{monic} \\ \mathrm{deg}(g)<\mathrm{deg}(f) \\ (f,g)=1}}1.
\end{equation}

\subsection{Quadratic Characters and the Corresponding $L$--function}
Assume that $q$ is odd and let $P(x)\in\mathbb{F}_{q}[x]$ be an irreducible polynomial. Then by \cite[Proposition 1.10]{Ro} if $f\in A$ and $P\nmid f$ we know that the congruence $x^{d}\equiv f\pmod P$ is solvable if and only if
$$f^{\frac{|P|-1}{d}}\equiv1\pmod P,$$
where $d$ is a divisor of $q-1$. The quadratic residue symbol arises when we consider $d=2$ and is denoted by $(f/P)\in\{\pm1\}$:
\begin{equation}
\left(\frac{f}{P}\right)\equiv f^{(|P|-1)/2}\pmod P,
\end{equation}
for $f$ coprime to $P$. 

We also can then define the Jacobi symbol $(f/Q)$ for arbitrary monic $Q$. Let $f$ be coprime to $Q$ and $Q=\alpha P_{1}^{e_{1}}P_{2}^{e_{2}}\ldots P_{s}^{e_{s}}$ so
$$\left(\frac{f}{Q}\right)=\prod_{j=1}^{s}\left(\frac{f}{P_{j}}\right)^{e_{j}}.$$
If $f,Q$ are not coprime we set $(f/Q)=0$ and if $\alpha\in\mathbb{F}_{q}^{*}$ is a scalar then

$$\left(\frac{\alpha}{Q}\right)=\alpha^{((q-1)/2)\mathrm{deg}Q}.$$

The analogue of the quadratic reciprocity law for function fields is

\begin{thm}[Quadratic reciprocity]

Let $A,B\in\mathbb{F}_{q}[x]$ be relatively prime and $A\neq0$ and $B\neq0$. Then,

$$\left(\frac{A}{B}\right)=\left(\frac{B}{A}\right)(-1)^{((q-1)/2)\mathrm{deg}(A)\mathrm{deg}(B)}=\left(\frac{B}{A}\right)(-1)^{((|A|-1)/2)((|B|-1)/2)}$$

\end{thm}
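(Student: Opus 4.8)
The plan is to prove the reciprocity law first for monic coprime polynomials --- the case actually used in this paper, since all the discriminants $D$ are monic --- and then recover the general nonzero case by factoring out leading coefficients. For monic coprime $A,B\in\mathbb{F}_{q}[x]$ with $\mathrm{deg}(A)=n$, $\mathrm{deg}(B)=m$, the key intermediate identity I would establish is
$$\left(\frac{A}{B}\right)=\chi\big(\mathrm{Res}(B,A)\big),\qquad \mathrm{Res}(B,A):=\prod_{B(\beta)=0}A(\beta)\in\mathbb{F}_{q}^{*},$$
where the product runs over the roots $\beta$ of $B$ in $\overline{\mathbb{F}_{q}}$ with multiplicity and $\chi$ is the quadratic character of $\mathbb{F}_{q}^{*}$, i.e. $\chi(c)=c^{(q-1)/2}$. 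Both sides are multiplicative in $B$ --- the Jacobi symbol by its definition, the resultant because $\mathrm{Res}(B_{1}B_{2},A)=\mathrm{Res}(B_{1},A)\,\mathrm{Res}(B_{2},A)$ --- so it suffices to treat $B=P$ monic irreducible of degree $m$. Fixing a root $\beta$ of $P$ gives $\mathbb{F}_{q}[x]/(P)\cong\mathbb{F}_{q^{m}}$ via $x\mapsto\beta$, and under this isomorphism the defining congruence $\left(\frac{A}{P}\right)\equiv A(x)^{(|P|-1)/2}\pmod{P}$ becomes $\left(\frac{A}{P}\right)=A(\beta)^{(q^{m}-1)/2}=\chi_{m}(A(\beta))$, where $\chi_{m}$ is the quadratic character of $\mathbb{F}_{q^{m}}^{*}$.

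Next I would use that $\mathbb{F}_{q^{m}}^{*}$ has a unique character of order two, so $\chi_{m}=\chi\circ N_{\mathbb{F}_{q^{m}}/\mathbb{F}_{q}}$: the composite $\chi\circ N$ has order exactly $2$ because the norm maps $\mathbb{F}_{q^{m}}^{*}$ onto $\mathbb{F}_{q}^{*}$. Since the Galois conjugates of $\beta$ are precisely the roots of $P$, this yields
$$\left(\frac{A}{P}\right)=\chi\Big(N_{\mathbb{F}_{q^{m}}/\mathbb{F}_{q}}\big(A(\beta)\big)\Big)=\chi\Big(\prod_{i=0}^{m-1}A\big(\beta^{q^{i}}\big)\Big)=\chi\Big(\prod_{P(\beta')=0}A(\beta')\Big)=\chi\big(\mathrm{Res}(P,A)\big),$$
which is the intermediate identity. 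Then the antisymmetry of the resultant, $\mathrm{Res}(B,A)=(-1)^{nm}\mathrm{Res}(A,B)$, gives
$$\left(\frac{A}{B}\right)=\chi\big((-1)^{nm}\big)\,\chi\big(\mathrm{Res}(A,B)\big)=(-1)^{((q-1)/2)nm}\left(\frac{B}{A}\right).$$
It remains to verify $((q-1)/2)\,\mathrm{deg}(A)\,\mathrm{deg}(B)\equiv\tfrac{|A|-1}{2}\cdot\tfrac{|B|-1}{2}\pmod 2$, using $\tfrac{q^{d}-1}{2}=\tfrac{q-1}{2}\big(1+q+\cdots+q^{d-1}\big)\equiv\tfrac{q-1}{2}\,d\pmod 2$ (each $q^{i}$ is odd) together with the fact that $\tfrac{q-1}{2}\big(\tfrac{q-1}{2}-1\big)$, a product of two consecutive integers, is even. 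Finally, to pass to arbitrary nonzero coprime $A,B$, write $A=aA_{0}$, $B=bB_{0}$ with $a,b\in\mathbb{F}_{q}^{*}$ and $A_{0},B_{0}$ monic, expand both sides by multiplicativity using $\left(\frac{\alpha}{Q}\right)=\alpha^{((q-1)/2)\mathrm{deg}Q}$, and check that the scalar contributions match.

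I expect the crux --- and essentially the only substantive input --- to be the norm-compatibility $\chi_{m}=\chi\circ N_{\mathbb{F}_{q^{m}}/\mathbb{F}_{q}}$ of quadratic characters up the tower $\mathbb{F}_{q}\subset\mathbb{F}_{q^{m}}$, together with the recognition of the norm $N(A(\beta))$ as the resultant $\prod_{P(\beta')=0}A(\beta')$; once these are in hand, the whole statement, including the translation of the sign exponent into $|A|$ and $|B|$, is bookkeeping. The only points needing mild care are the degenerate cases (scalar arguments, and $B$ not squarefree, where one uses that the intermediate identity is already multiplicative in $B$). Alternatively, this classical result may simply be quoted from \cite{Ro}.
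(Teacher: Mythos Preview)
The paper does not prove this theorem at all: it is stated as background, without proof, in the preliminaries section, and is implicitly attributed to the standard reference \cite{Ro}. Your resultant-based argument is the classical one (essentially the proof in Rosen's book), and you already anticipate this in your closing sentence; so there is nothing to compare --- your proposal supplies a correct proof where the paper simply quotes the result.
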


\begin{defi}
Let $D\in\mathbb{F}_{q}[x]$ be square-free.  We define the \textit{quadratic character} $\chi_{D}$ using the quadratic residue symbol for $\mathbb{F}_{q}[x]$ by
\begin{equation}
\chi_{D}(f)=\left(\frac{D}{f}\right).
\end{equation}
So, if $P\in A$ is monic irreducible we have
$$\chi_{D}(P)=\left\{
\begin{array}{cl}
0, & \mathrm{if}\ P\mid D,\\
1, & \mathrm{if}\ P\not{|} D \ \mathrm{and} \ D \ \mathrm{is \ a \ square \ modulo} \ P,\\
-1, & \mathrm{if}\ P\not{|} D \ \mathrm{and} \ D \ \mathrm{is \ a \ non\ square \ modulo} \ P.\\
\end{array}
\right.$$
\end{defi}
We define the $L$--function corresponding to the quadratic character $\chi_{D}$ by
\begin{equation}
\mathcal{L}(u,\chi_{D}):=\prod_{\substack{P \ \mathrm{monic}\\ \mathrm{irreducible}}}(1-\chi_{D}(P)u^{\mathrm{deg}P})^{-1}, \ \ \ \ \ |u|<1/q
\end{equation}
where $u=q^{-s}$. The $L$--function above can also be expressed as an infinite series in the usual way:
\begin{equation}\label{eq:3.8}
\mathcal{L}(u,\chi_{D})=\sum_{\substack{f\in A \\ f \ \mathrm{monic}}}\chi_{D}(f)u^{\mathrm{deg}f}=L(s,\chi_{D})=\sum_{\substack{f\in A \\ f \ \mathrm{monic}}}\frac{\chi_{D}(f)}{|f|^{s}}.
\end{equation}
We can write \eqref{eq:3.8} as
\begin{equation}\label{eq:3.9}
\mathcal{L}(u,\chi_{D})=\sum_{n\geq0}\sum_{\substack{\mathrm{deg}(f)=n \\ f \ \mathrm{monic}}}\chi_{D}(f)u^{n}.
\end{equation}
If we denote
$$A_{D}(n):=\sum_{\substack{f \ \mathrm{monic} \\ \mathrm{deg}(f)=n}}\chi_{D}(f),$$
we can write \eqref{eq:3.9} as
\begin{equation}
\sum_{n\geq0}A_{D}(n)u^{n},
\end{equation}
and \cite[Propostion 4.3]{Ro}, if $D$ is a non--square polynomial of positive degree, then $A_{D}(n)=0$ for $n\geq\mathrm{deg}(D)$.  So in this case the $L$--function is in fact a polynomial of degree at most $\mathrm{deg}(D)-1$.

We now assume the primitivity condition that $D$ is a square--freee monic polynomial of positive degree. Following the arguments presented in \cite{Ru} we have that $\mathcal{L}(u,\chi_{D})$ has a ``trivial" zero at $u=1$ if and only if $\mathrm{deg}(D)$ is even, which enables us to define the ``completed" $L$--function
\begin{equation}\label{eq:3.11}
\mathcal{L}(u,\chi_{D})=(1-u)^{\lambda}\mathcal{L}^{*}(u,\chi_{D}), \ \ \ \ \ \lambda=\left\{
\begin{array}{rcl}
1, & \mathrm{deg}(D) \ \mathrm{even},\\
0, & \mathrm{deg}(D) \ \mathrm{odd},\\
\end{array}
\right.
\end{equation}
where $\mathcal{L}^{*}(u,\chi_{D})$ is a polynomial of even degree
$$2\delta=\mathrm{deg}(D)-1-\lambda$$
satisfying the functional equation
$$\mathcal{L}^{*}(u,\chi_{D})=(qu^{2})^{\delta}\mathcal{L}^{*}(1/qu,\chi_{D}).$$

By \cite[Proposition 14.6 and 17.7]{Ro}, $\mathcal{L}^{*}(u,\chi_{D})$ is the Artin $L$--function corresponding to the unique nontrivial quadratic character of $\mathbb{F}_{q}(x)(\sqrt{D(x)})$.  The fact that is important for this paper is that the numerator $P_{C}(u)$ of the zeta-function of the hyperelliptic curve $y^{2}=D(x)$ coincides with the completed Dirichlet $L$--function $\mathcal{L}^{*}(u,\chi_{D})$ associated with the quadratic character $\chi_{D}$, as was found in Artin's thesis. So we can write $\mathcal{L}^{*}(u,\chi_{D})$ as
\begin{equation}
\mathcal{L}^{*}(u,\chi_{D})=\sum_{n=0}^{2\delta}A^{*}_{D}(n)u^{n},
\end{equation}
where $A_{D}^{*}(0)=1$ and $A_{D}^{*}(2\delta)=q^{\delta}$.

For $D$ monic, square-free, and of positive degree, the zeta function \eqref{eq:zetaC} of the hyperelliptic curve $y^{2}=D(x)$ is
\begin{equation}\label{eq:3.13}
Z_{C_{D}}(u)=\frac{\mathcal{L}^{*}(u,\chi_{D})}{(1-u)(1-qu)}.
\end{equation}
As we are interested in computing
\begin{equation}\label{eq:3.14}
\frac{1}{\#\mathcal{H}_{2g+1,q}}\sum_{D\in\mathcal{H}_{2g+1,q}}P_{C_{D}}(q^{-1/2}),
\end{equation}
where $D\in\mathcal{H}_{2g+1,q}$, we have that $\mathrm{deg}(D)$ is odd and so by \eqref{eq:3.11} we have that $\mathcal{L}^{*}(u,\chi_{D})=\mathcal{L}(u,\chi_{D})$ and \eqref{eq:3.14} becomes
\begin{equation}\label{eq:3.15}
\frac{1}{\#\mathcal{H}_{2g+1,q}}\sum_{D\in\mathcal{H}_{2g+1,q}}\mathcal{L}(q^{-1/2},\chi_{D}),
\end{equation}
which will be the principal quantity of study. So the principal problem we consider is to obtain an asymptotic formula
for
\begin{equation}
\sum_{D\in\mathcal{H}_{2g+1,q}}\mathcal{L}(q^{-1/2},\chi_{D})
\end{equation}
as $g\rightarrow\infty$, where $\mathcal{L}(u,\chi_{D})$ is the Dirichlet $L$--function associated with the quadratic character $\chi_{D}$ of $\mathbb{F}_{q}[x]$.

\subsection{The Hyperelliptic Ensemble $\mathcal{H}_{2g+1,q}$}
Let $\mathcal{H}_{d}$ be the set of square--free monic polynomials of degree $d$ in $\mathbb{F}_{q}[x]$. The cardinality of $\mathcal{H}_{d}$ is
$$\#\mathcal{H}_{d}=\left\{
\begin{array}{lcl}
(1-1/q)q^{d}, & d\geq2,\\
q, & d=1.\\
\end{array}
\right.$$
(This can be proved using
$$\sum_{d>0}\frac{\#\mathcal{H}_{d}}{q^{ds}}=\sum_{\substack{f \ \mathrm{monic} \\ \mathrm{squarefree}}}|f|^{-s}=\frac{\zeta_{A}(s)}{\zeta_{A}(2s)}$$
and \eqref{eq:zetaA1}  \cite[Proposition 2.3]{Ro}). In particular, for $g\geq1$ we have,
\begin{equation}\label{eq:3.17}
\#\mathcal{H}_{2g+1,q}=(q-1)q^{2g}=\frac{|D|}{\zeta_{A}(2)}.
\end{equation}

We can treat $\mathcal{H}_{2g+1,q}$ as a probability space (ensemble) with uniform probability measure. Thus the expected value of any continuous function $F$ on $\mathcal{H}_{2g+1,q}$ is defined as
\begin{equation}
\left\langle F(D)\right\rangle:=\frac{1}{\#\mathcal{H}_{2g+1,q}}\sum_{D\in\mathcal{H}_{2g+1,q}}F(D).
\end{equation}

Using the Mobius function $\mu$ of $\mathbb{F}_{q}[x]$ defined in \eqref{eq:3.3} we can sieve out the square-free polynomials, since
\begin{equation}
\sum_{A^{2}|D}\mu(A)=\left\{
\begin{array}{rcl}
1, & D \ \mathrm{square \ free},\\
0, & \mathrm{otherwise}.\\
\end{array}
\right.
\end{equation}
And in this way we can write the expected value of any function $F$ as
\begin{eqnarray}\label{eq:3.20}
\left\langle F(D)\right\rangle &=& \frac{1}{\#\mathcal{H}_{2g+1,q}}\sum_{\substack{D \ \mathrm{monic} \\ \mathrm{deg}(D)=2g+1}}\sum_{A^{2}\mid D}\mu(A)F(D)\\
& = & \frac{1}{(q-1)q^{2g}}\sum_{2\alpha+\beta=2g+1}\sum_{\substack{B \ \mathrm{monic} \\ \mathrm{deg}B=\beta}}\sum_{\substack{A \ \mathrm{monic} \\ \mathrm{deg}A=\alpha}}\mu(A)F(A^{2}B).\nonumber
\end{eqnarray}

\subsection{Spectral Interpretation and the Katz--Sarnak Philosophy}
As already noted, the goal of this paper is to explore the limit $q$ fixed and $g\rightarrow\infty$, as in \cite{FR, KR, Ru}. It is worth explaining an important difference from the opposite limit in which $q\rightarrow\infty$ with $g$ fixed.

The Riemann Hypothesis for curves over a finite field, proved by Weil \cite{W}, is that all zeros of $Z_{C}(u)$, and hence $\mathcal{L}^{*}(u,\chi_{D})$, lie on the circle $|u|=q^{-1/2}$. Equivalently, all the roots of $L(s,\chi_{D})$ lie on the line $\mathfrak{R}(s)=\tfrac{1}{2}$.

The polynomial $\mathcal{L}^{*}(u,\chi_{D})$ is the characteristic polynomial of a unitary symplectic matrix $\Theta_{C_{D}}\in USp(2g)$, defined up to conjugacy, and we can write
$$\mathcal{L}^{*}(u,\chi_{D})=\det(I-u\sqrt{q}\Theta_{C_{D}}).$$
The eigenvalues of $\Theta_{C_{D}}$ are of the form $e(\theta_{C,j})$, $j=1,\ldots,2g$, where $e(\theta)=e^{2\pi i\theta}$.

For a fixed genus $g$, Katz and Sarnak \cite{KS1} showed that the conjugacy classes (Frobenius classes) $\{\Theta_{C_{D}}:C_{D}\in\mathcal{H}_{2g+1,q}\}$ become equidistributed (with respect to Haar measure) in the unitary symplectic group $USp(2g)$  in the limit $q\rightarrow\infty$. That is, for any continuous function on the space of conjugacy classes of $USp(2g)$,

$$\lim_{q\rightarrow\infty}\left\langle F(\Theta_{C_{D}})\right\rangle=\int_{USp(2g)}F(A)dA$$
where $dA$ is the Haar measure.

This result allows one to compute arithmetic quantities such as $\log(\mathcal{L}^{*}(u,\chi_{D}))$ and the moments of $\mathcal{L}^{*}(u,\chi_{D})$ as $C_{D}$ varies in $\mathcal{H}_{2g+1,q}$ by using the corresponding computation in Random Matrix Theory for $USp(2g)$. For example, setting $u=q^{-1/2}$ one has in general that

$$\lim_{q\rightarrow\infty}\frac{1}{\#\mathcal{H}_{2g+1,q}}\sum_{D\in\mathcal{H}_{2g+1,q}}(\mathcal{L}^{*}(q^{-1/2},\chi_{D}))^s=\int_{USp(2g)}(\det(I-A))^sdA,$$
and Keating and Snaith \cite{KeS2} computed the moments of the characteristic polynomial in $USp(2g)$:

$$\int_{USp(2g)}\det(I-A)^{s}dA=2^{2gs}\prod_{j=1}^{g}\frac{\Gamma(1+g+j)\Gamma(1/2+s+j)}{\Gamma(1/2+j)\Gamma(1+s+g+j)}.$$
In our case here, where $q$ is fixed and $g\rightarrow\infty$ the matrices $\Theta_{C_{D}}$ inhabit different spaces as $g$ grows, and we do not know how to formulate an equidistribution problem. 


\subsection{``Approximate" Functional Equation}
The starting point in the proof of Theorem \ref{thm:mainthm} is a representation for $\mathcal{L}(u,\chi_{D})$ which can be viewed as the analogue of the approximate functional equation for the Riemann zeta function (equation 4.12.4 in \cite{T}) or for the quadratic Dirichlet $L$--function (Lemma 3 in \cite{J}). In our case the formula is an identity rather than an approximation.

\begin{lem}[``Approximate" Functional Equation]\label{lem:funceq} Let $\chi_{D}$ be a quadratic character, where $D\in\mathcal{H}_{2g+1,q}$. Then

\begin{multline}
P_{C_{D}}(q^{-1/2})=\mathcal{L}(q^{-1/2},\chi_{D})\\
=\sum_{n=0}^{g}\sum_{\substack{f_{1} \ \mathrm{monic} \\ \mathrm{deg}(f_{1})=n}}\chi_{D}(f_{1})q^{-n/2}+\sum_{m=0}^{g-1}\sum_{\substack{f_{2} \ \mathrm{monic} \\ \mathrm{deg}(f_{2})=m}}\chi_{D}(f_{2})q^{-m/2}.
\end{multline}
\end{lem}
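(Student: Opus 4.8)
The plan is to exploit the functional equation for $\mathcal{L}^{*}(u,\chi_D)=P_{C_D}(u)$ together with the fact that, for $D\in\mathcal{H}_{2g+1,q}$, the degree $\deg(D)=2g+1$ is odd, so $\lambda=0$, $\mathcal{L}^{*}=\mathcal{L}$, and $2\delta=\deg(D)-1=2g$. Thus $\mathcal{L}(u,\chi_D)=\sum_{n=0}^{2g}A_D(n)u^{n}$ is a polynomial of degree exactly $2g$ satisfying $\mathcal{L}(u,\chi_D)=(qu^{2})^{g}\mathcal{L}(1/(qu),\chi_D)$. Comparing coefficients on both sides of this identity yields the symmetry relation $A_D(2g-n)=q^{g-n}A_D(n)$ for $0\le n\le 2g$. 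This is the key structural input.

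The next step is to split the polynomial at its midpoint. Write
\begin{equation}
\mathcal{L}(q^{-1/2},\chi_D)=\sum_{n=0}^{2g}A_D(n)q^{-n/2}=\sum_{n=0}^{g}A_D(n)q^{-n/2}+\sum_{n=g+1}^{2g}A_D(n)q^{-n/2}.
\end{equation}
In the second sum I would substitute $n\mapsto 2g-m$, so that $n$ running from $g+1$ to $2g$ corresponds to $m$ running from $0$ to $g-1$, and apply the symmetry relation $A_D(2g-m)=q^{g-m}A_D(m)$. The exponent of $q$ becomes $-(2g-m)/2+(g-m)=-m/2$, so the second sum turns into $\sum_{m=0}^{g-1}A_D(m)q^{-m/2}$. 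Recalling $A_D(n)=\sum_{f\ \mathrm{monic},\ \deg f=n}\chi_D(f)$ from \eqref{eq:3.9}, this gives exactly the two sums in the statement, with $f_1$ ranging over monic polynomials of degree $n\le g$ and $f_2$ over monic polynomials of degree $m\le g-1$. The identification $P_{C_D}(q^{-1/2})=\mathcal{L}(q^{-1/2},\chi_D)$ is already established in the discussion around \eqref{eq:3.11}--\eqref{eq:3.13}.

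The only genuine point requiring care — and the main obstacle — is justifying the coefficient comparison cleanly. One must check that both sides of $\mathcal{L}(u,\chi_D)=(qu^{2})^{g}\mathcal{L}(1/(qu),\chi_D)$ are polynomials in $u$ of degree $\le 2g$ (immediate from $\deg\mathcal{L}=2\delta=2g$), expand the right-hand side as $(qu^{2})^{g}\sum_{k=0}^{2g}A_D(k)q^{-k}u^{-k}=\sum_{k=0}^{2g}A_D(k)q^{g-k}u^{2g-k}$, and read off that the coefficient of $u^{n}$ is $A_D(2g-n)q^{g-(2g-n)}=A_D(2g-n)q^{n-g}$. Setting this equal to $A_D(n)$ gives $A_D(n)=q^{n-g}A_D(2g-n)$, equivalently $A_D(2g-n)=q^{g-n}A_D(n)$, as used above. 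One should also note the boundary consistency $A_D(0)=1$ and $A_D(2g)=q^{g}$, matching $A_D^{*}(0)=1$, $A_D^{*}(2\delta)=q^{\delta}$. Everything else is bookkeeping: reindexing the tail sum and tracking the powers of $q^{-1/2}$. No analytic estimates are needed here — this lemma is, as the text says, an exact identity, so the proof is purely formal manipulation of the functional equation.
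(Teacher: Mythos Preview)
Your proposal is correct and follows essentially the same route as the paper: both derive the coefficient symmetry $a_n=q^{n-g}a_{2g-n}$ by comparing coefficients in the functional equation, split the polynomial at its midpoint, and apply the symmetry to the tail. The only cosmetic difference is that the paper carries out the split for general $u$ and then sets $u=q^{-1/2}$, whereas you specialize to $u=q^{-1/2}$ first and then reindex; the content is identical.
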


\begin{proof}
Following \cite{CFKRS} we substitute $P_{C}(u)=\sum_{n=0}^{2g}a_{n}u^{n}$ into the functional equation \eqref{eq:funceq}
\begin{eqnarray}
\sum_{n=0}^{2g}a_{n}u^{n} & = & q^{g}u^{2g}\sum_{m=0}^{2g}a_{m}\left(\frac{1}{qu}\right)^{m}=q^{g}u^{2g}\sum_{m=0}^{2g}a_{m}q^{-m}u^{-m}\nonumber \\
& = & \sum_{m=0}^{2g}a_{m}q^{g-m}u^{2g-m}=\sum_{k=0}^{2g}a_{2g-k}q^{k-g}u^{k}.\nonumber
\end{eqnarray}
Therefore,
$$\sum_{n=0}^{2g}a_{n}u^{n}=\sum_{k=0}^{2g}a_{2g-k}q^{k-g}u^{k}.$$
Equating coefficients we have that
$$a_{n}=a_{2g-n}q^{n-g} \ \ \ \ \ \mathrm{or} \ \ \ \ \ a_{2g-n}=a_{n}q^{g-n}$$
and so we can write the polynomial $P_{C}(u)$ as
\begin{eqnarray}
\sum_{n=0}^{2g}a_{n}u^{n} & = & \sum_{n=0}^{g}a_{n}u^{n}+\sum_{m=0}^{g-1}a_{2g-m}u^{2g-m}\nonumber \\
& = & \sum_{n=0}^{g}a_{n}u^{n}+\sum_{m=0}^{g-1}a_{m}q^{g-m}u^{2g-m}\nonumber\\
& = & \sum_{n=0}^{g}a_{n}u^{n}+q^{g}u^{2g}\sum_{m=0}^{g-1}a_{m}q^{-m}u^{-m}\label{eq:3.22}.
\end{eqnarray}
Writing $\displaystyle{a_{n}=\sum_{\substack{f \ \mathrm{monic} \\ \mathrm{deg}(f)=n}}\chi_{D}(f)}$ and $u=q^{-1/2}$ in \eqref{eq:3.22} proves the lemma.
\end{proof}
We can write the polynomial $P_{C}(u)$ using the variable $s$ and so \eqref{eq:3.22} becomes
\begin{equation}
\mathcal{L}(u,\chi_{D})=L(s,\chi_{D})=\sum_{\substack{f_{1} \ \mathrm{monic}\\ \mathrm{deg}(f_{1})\leq g}}\frac{\chi_{D}(f_{1})}{|f_{1}|^{s}}+(q^{1-2s})^{g}\sum_{\substack{f_{2} \ \mathrm{monic}\\ \mathrm{deg}(f_{2})\leq g-1}}\frac{\chi_{D}(f_{2})}{|f_{2}|^{1-s}}.
\end{equation}
\subsection{Two Simple Lemmas}
We will state two simple lemmas which will be used in the calculations below. The proofs can be found in \cite[Propositions 1.7 and 2.7]{Ro}.

\begin{lem}\label{lem:2}
\begin{equation}
\Phi(f)=|f|\prod_{\substack{P \ \mathrm{monic} \\ \mathrm{irreducible} \\ P|f}}\left(1-\frac{1}{|P|}\right).
\end{equation}
\end{lem}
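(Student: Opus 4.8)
\textbf{Proof proposal for Lemma~\ref{lem:2}.}

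The plan is to prove the multiplicativity of $\Phi$ first and then evaluate it on prime powers, exactly mirroring the classical argument for Euler's totient function. First I would observe that $\Phi(f)$ counts the monic polynomials of degree strictly less than $\deg f$ that are coprime to $f$; since the residues of such polynomials modulo $f$ range over a complete set of representatives for $A/fA$, we have $\Phi(f) = \#(A/fA)^{\times}$, the order of the unit group of the finite ring $A/fA$. This reinterpretation is the key move: it converts a counting problem over polynomials into a statement about units in quotient rings, where the Chinese Remainder Theorem is available.

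Next, writing $f = P_1^{e_1}\cdots P_s^{e_s}$ with the $P_j$ distinct monic irreducibles, the Chinese Remainder Theorem gives a ring isomorphism $A/fA \cong \prod_{j=1}^s A/P_j^{e_j}A$, hence $(A/fA)^{\times} \cong \prod_{j=1}^s (A/P_j^{e_j}A)^{\times}$, so $\Phi$ is multiplicative: $\Phi(f) = \prod_{j=1}^s \Phi(P_j^{e_j})$. It then remains to compute $\Phi(P^e)$ for a single prime power. A monic polynomial of degree less than $e\deg P$ fails to be coprime to $P^e$ precisely when it is divisible by $P$; the number of monic polynomials of degree $< e\deg P$ is $|P|^e = q^{e\deg P}$ (counting the zero polynomial as degree $<$ anything, or handling the normalization carefully), and those divisible by $P$ number $|P|^{e-1}$. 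Thus $\Phi(P^e) = |P|^e - |P|^{e-1} = |P|^e(1 - |P|^{-1})$. Multiplying over $j$ yields $\Phi(f) = \prod_j |P_j|^{e_j}(1 - |P_j|^{-1}) = |f|\prod_{P \mid f}(1 - |P|^{-1})$, which is the claimed formula.

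The main obstacle, such as it is, is purely bookkeeping: one must be careful about the treatment of the zero polynomial and the precise meaning of ``degree $< \deg f$'' so that the count $\Phi(f) = \#(A/fA)^{\times}$ holds on the nose (for instance, the residue classes modulo $f$ are represented by polynomials of degree $< \deg f$ together with $0$, and $0$ is never coprime to $f$ when $\deg f \geq 1$, so it contributes nothing). Once that normalization is pinned down the rest is the standard CRT-plus-prime-power computation. Since this is a restatement of \cite[Proposition 1.7]{Ro}, I would in fact simply cite that reference rather than reproduce the argument, as the excerpt already does.
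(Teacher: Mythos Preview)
Your proposal is correct and aligns with the paper: the paper does not give an independent proof but simply cites \cite[Proposition~1.7]{Ro}, and your sketch (reinterpret $\Phi(f)$ as $\#(A/fA)^\times$, apply the Chinese Remainder Theorem, then count units in $A/P^eA$) is exactly the standard argument found there. Your closing remark that one would just cite the reference is precisely what the paper does.
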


\begin{lem}\label{lem:3}
\begin{equation}
\sum_{\substack{f \ \mathrm{monic} \\ \mathrm{deg}(f)=n}}\Phi(f)=q^{2n}(1-q^{-1}).
\end{equation}
\end{lem}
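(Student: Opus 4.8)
The plan is to derive $\sum_{\deg f = n}\Phi(f) = q^{2n}(1-q^{-1})$ by packaging the totient sums into a generating (zeta-type) Dirichlet series and reading off the coefficient of $q^{-ns}$, or equivalently by a direct combinatorial argument using the characterisation of $\Phi$ already recorded in Lemma~\ref{lem:2}. I would favour the combinatorial route since it is self-contained and avoids any convergence bookkeeping. Recall that $\Phi(f)$ counts monic polynomials of degree $<\deg f$ coprime to $f$; equivalently, $\Phi(f)$ is the order of the unit group $(A/fA)^{*}$, where $A=\mathbb{F}_q[x]$. This is the fact that makes the identity clean, because $\Phi$ is then multiplicative on coprime arguments and satisfies $\Phi(P^{e}) = |P|^{e} - |P|^{e-1}$ for $P$ monic irreducible.

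The key step is the Euler-product identity $\sum_{f\text{ monic}} \Phi(f)|f|^{-s} = \zeta_A(s-1)/\zeta_A(s)$, valid for $\mathfrak{R}(s)$ large. First I would verify this by expanding the right-hand side as a product over primes $P$ of $\dfrac{1-|P|^{-s}}{1-|P|^{1-s}} = \sum_{e\ge 0}\bigl(|P|^{e}-|P|^{e-1}\bigr)|P|^{-es}$ (interpreting the $e=0$ term as $1$), which matches the local factor $\sum_{e\ge0}\Phi(P^{e})|P|^{-es}$ by multiplicativity of $\Phi$. Then, using the explicit formulas $\zeta_A(s) = (1-q^{1-s})^{-1}$ from \eqref{eq:zetaA1} and hence $\zeta_A(s-1) = (1-q^{2-s})^{-1}$, I would compute
\begin{equation}
\frac{\zeta_A(s-1)}{\zeta_A(s)} = \frac{1-q^{1-s}}{1-q^{2-s}} = \frac{1 - q\cdot q^{-s}}{1 - q^{2}\cdot q^{-s}}.
\end{equation}
Writing $u = q^{-s}$ and expanding the geometric series, $\dfrac{1-qu}{1-q^{2}u} = (1-qu)\sum_{n\ge0}q^{2n}u^{n} = 1 + \sum_{n\ge1}(q^{2n} - q\cdot q^{2(n-1)})u^{n} = 1 + \sum_{n\ge1}q^{2n}(1-q^{-1})u^{n}$. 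Matching the coefficient of $u^{n} = q^{-ns}$ on both sides and observing that the coefficient of $q^{-ns}$ in $\sum_f \Phi(f)|f|^{-s}$ is exactly $\sum_{\deg f = n}\Phi(f)$ yields the claimed value $q^{2n}(1-q^{-1})$ for all $n\ge1$ (and $1$ for $n=0$, consistent since there is one monic polynomial of degree $0$, namely $1$, with $\Phi(1)=1$).

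The main obstacle, such as it is, is purely bookkeeping: one must be careful that the Dirichlet-series manipulations are legitimate, i.e.\ that the Euler product converges and the coefficient extraction is valid, which holds for $\mathfrak{R}(s)$ sufficiently large (indeed $\mathfrak{R}(s) > 2$ suffices since $\Phi(f) \le |f|$). Since the final identity is polynomial in $u = q^{-s}$ on the level of each degree-$n$ coefficient, no analytic continuation is needed — the equality of coefficients in the region of absolute convergence is already the full statement. Alternatively, if one wishes to avoid generating functions entirely, the same result follows by the convolution identity $|f| = \sum_{d\mid f}\Phi(d)$ (summing $\Phi(d)$ over monic divisors $d$ of $f$), summing over $\deg f = n$, inverting via the function-field Möbius function, and invoking Lemma~\ref{lem:3}'s companion count $\#\{f \text{ monic} : \deg f = n\} = q^{n}$; but the zeta-function computation above is the shortest path. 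Since the excerpt states the proof is in \cite[Proposition 2.7]{Ro}, I would simply present the generating-function computation as the proof.
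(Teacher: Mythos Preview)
Your proof is correct. The paper does not actually supply its own argument for this lemma; it merely states ``The proofs can be found in \cite[Propositions 1.7 and 2.7]{Ro}.'' Your generating-function computation --- establishing $\sum_{f}\Phi(f)|f|^{-s}=\zeta_{A}(s-1)/\zeta_{A}(s)$ via Euler products and then reading off the coefficient of $u^{n}=q^{-ns}$ from the explicit rational function $(1-qu)/(1-q^{2}u)$ --- is exactly the standard route and is, in essence, what Rosen does in Proposition~2.7. Your observation that the formula as stated is valid only for $n\ge 1$ (the $n=0$ coefficient being $1$ rather than $1-q^{-1}$) is a useful caveat, though in the paper the lemma is only ever invoked with $n\ge 1$ so no harm results.
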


\section{Setting Up The Problem}
The basic quantity of study \eqref{eq:2.4} can be viewed, from \eqref{eq:3.13} and \eqref{eq:3.15}, as
\begin{equation}\label{eq:4.1}
\sum_{D\in\mathcal{H}_{2g+1,q}}\mathcal{L}(q^{-1/2},\chi_{D})=\sum_{D\in\mathcal{H}_{2g+1,q}}\sum_{n=0}^{2g}\sum_{\substack{f \ \mathrm{monic} \\ \mathrm{deg}(f)=n}}\chi_{D}(f)q^{-n/2}.
\end{equation}
Using Lemma \ref{lem:funceq} we can save $g$ terms and write \eqref{eq:4.1} as
\begin{multline}\label{eq:4.2}
\sum_{D\in\mathcal{H}_{2g+1,q}}\mathcal{L}(q^{-1/2},\chi_{D})\\ =\sum_{D\in\mathcal{H}_{2g+1,q}}\sum_{n=0}^{g}\sum_{\substack{f_{1} \ \mathrm{monic} \\ \mathrm{deg}(f_{1})=n}}\chi_{D}(f_{1})q^{-n/2}+\sum_{D\in\mathcal{H}_{2g+1,q}}\sum_{m=0}^{g-1}\sum_{\substack{f_{2} \ \mathrm{monic} \\ \mathrm{deg}(f_{2})=m}}\chi_{D}(f_{2})q^{-m/2}.
\end{multline}
As both terms on the right--hand side of \eqref{eq:4.2} are similar we need only worry about computing one of them to obtain the final result.

\subsection{Averaging the Approximate Functional Equation}
We are interested in obtaining an asymptotic formula for the first term on the RHS of \eqref{eq:4.2} and so we need to compute
\begin{multline}\label{eq:4.3}
\sum_{D\in\mathcal{H}_{2g+1,q}}\sum_{n=0}^{g}\sum_{\substack{f \ \mathrm{monic} \\ \mathrm{deg}(f)=n}}\chi_{D}(f)q^{-n/2}\\
=\sum_{n=0}^{g}q^{-n/2}\sum_{D\in\mathcal{H}_{2g+1,q}}\sum_{\substack{f \  \mathrm{monic}\\ \mathrm{deg}(f)=n \\ f=\Box=l^{2}}}\chi_{D}(f)+\sum_{n=0}^{g}q^{-n/2}\sum_{D\in\mathcal{H}_{2g+1,q}}\sum_{\substack{f \  \mathrm{monic}\\ \mathrm{deg}(f)=n \\ f\neq\Box}}\chi_{D}(f)\ \ \ \ \ \ \ \ \ \ \ \ \ \ \ \ \ \ \ \ \ \ \ \ \ \ \ \ \ \ \ \ \ \ \ \ \ \ \ \ \ \ \ \ \ \ \ \ \ \ \ \ \ \ \ \ \ \ \ \ \ \ \ \ \ \ \ \ \ \ \ \ \\
=\sum_{\substack{n=0 \\ 2|n}}^{g}q^{-n/2}\sum_{\substack{l \ \mathrm{monic} \\ \mathrm{deg}(l)=n/2}}\sum_{D\in\mathcal{H}_{2g+1,q}}\chi_{D}(l^{2})+\sum_{n=0}^{g}q^{-n/2}\sum_{\substack{f \  \mathrm{monic}\\ \mathrm{deg}(f)=n \\ f\neq\Box}}\sum_{D\in\mathcal{H}_{2g+1,q}}\chi_{D}(f) \ \ \ \ \ \ \ \ \ \ \ \ \ \ \ \ \ \ \ \ \ \ \ \ \ \ \ \ \ \ \ \ \ \ \ \ \ \ \ \ \ \ \ \ \ \ \ \ \ \ \ \ \ \ \ \\
=\sum_{\substack{n=0 \\ 2|n}}^{g}q^{-n/2}\sum_{\substack{l \ \mathrm{monic} \\ \mathrm{deg}(l)=n/2}}\sum_{\substack{D\in\mathcal{H}_{2g+1,q} \\ (D,l)=1}}1+\sum_{n=0}^{g}q^{-n/2}\sum_{\substack{f \  \mathrm{monic}\\ \mathrm{deg}(f)=n \\ f\neq\Box}}\sum_{D\in\mathcal{H}_{2g+1,q}}\chi_{D}(f), \ \ \ \ \ \ \ \ \ \ \ \ \ \ \ \ \ \ \ \ \ \ \ \ \ \ \ \ \ \ \ \ \ \ \ \ \ \ \ \ \ \ \ \ \ \ \ \ \ \ \ \ \ \ \ \ \ \ \ \ \ \ \ \ \ \ \ \ \ \ \ \ \ \ \ \ \ \ \ \ \ \ \ \ \ \ \ \ \ \ \ \ \ \ \ \ \ \ \ \ \ \ \ \ \ \ \ \ \ \
\end{multline}
where the first term on the RHS of the final expression corresponds to contributions of squares to the average and the second term to the contributions of non--squares.

Basically the problem is the following: for the square contributions we need to count square--free polynomials which are coprime to a fixed monic polynomial and to perform the summation over monic polynomials $l$ and over integers $n$ up to $g$, and for the non--square contributions the difficulty is to average the non--trivial quadratic character.

\section{The Main Term.}
In this section we will derive an asymptotic formula for
\begin{equation}
\sum_{\substack{n=0 \\ 2|n}}^{g}q^{-n/2}\sum_{\substack{l \ \mathrm{monic} \\ \mathrm{deg}(l)=n/2}}\sum_{\substack{D\in\mathcal{H}_{2g+1,q} \\ (D,l)=1}}1
\end{equation}
which corresponds to the contributions of squares to the average. As in the number field case, the contribution of squares gives the main term of the first moment. The principal result in this section is
\begin{prop}\label{prop:1}With the same notation as in Theorem \ref{thm:mainthm},
\begin{multline}
\sum_{\substack{n=0 \\ 2|n}}^{g}q^{-n/2}\sum_{\substack{l \ \mathrm{monic} \\ \mathrm{deg}(l)=n/2}}\sum_{\substack{D\in\mathcal{H}_{2g+1,q} \\ (D,l)=1}}1\\
=\frac{P(1)}{\zeta_{A}(2)}|D|\left\{([g/2]+1)+\sum_{\substack{P \ \mathrm{monic} \\ \mathrm{irreducible}}}\frac{\mathrm{deg}P}{|P|(|P|+1)-1}\right\}+O(gq^{\frac{3}{2}g}).
\end{multline}
\end{prop}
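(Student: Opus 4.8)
The plan is to reduce the triple sum to the inner count $N_l:=\#\{D\in\mathcal{H}_{2g+1,q}:(D,l)=1\}$, to evaluate $N_l$ exactly up to a negligible tail, and then to carry out the remaining arithmetic sum by a Dirichlet convolution. \textbf{First}, setting $n=2k$ the quantity in question is $\sum_{k=0}^{[g/2]}q^{-k}\sum_{\deg l=k}N_l$, so it suffices to understand $N_l$ for monic $l$ with $\deg l=k\le[g/2]$. \textbf{Next}, sieving out the square-free condition with the Möbius function of $\mathbb{F}_q[x]$ gives
\[
N_l=\sum_{\substack{A\ \mathrm{monic}\\(A,l)=1}}\mu(A)\,M_l(2g+1-2\deg A),\qquad M_l(m):=\#\{B\ \mathrm{monic}:\deg B=m,\ (B,l)=1\},
\]
(with $M_l(m)=0$ for $m<0$), and inclusion--exclusion over the prime divisors of $l$ combined with Lemma~\ref{lem:2} yields the \emph{exact} identity $M_l(m)=q^m\,\Phi(l)/|l|$ for all $m\ge\deg l$. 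Since $\deg A\le g$ (because $\deg(A^2B)=2g+1$) and $k\le[g/2]$, this formula applies to every $A$ with $\deg A\le(2g+1-k)/2$; the complementary range contains $O(k)$ values of $\deg A$, and bounding there trivially by $M_l(m)\le q^m$ and $\#\{A:\deg A=a\}\le q^a$ shows it contributes only $O(q^{5g/4})$ to the whole triple sum.

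On the main range I would replace the truncated sum $\sum_{(A,l)=1,\ \deg A\le(2g+1-k)/2}\mu(A)|A|^{-2}$ by the full Euler product $\prod_{P\nmid l}(1-|P|^{-2})=\zeta_A(2)^{-1}\prod_{P\mid l}(1-|P|^{-2})^{-1}$, the tail again costing $O(q^{5g/4})$. Using Lemma~\ref{lem:2} and $\tfrac{1-|P|^{-1}}{1-|P|^{-2}}=\tfrac{|P|}{|P|+1}$, one is left with
\[
\sum_{\substack{n=0\\2\mid n}}^{g}q^{-n/2}\sum_{\deg l=n/2}\sum_{\substack{D\in\mathcal{H}_{2g+1,q}\\(D,l)=1}}1=\frac{|D|}{\zeta_A(2)}\sum_{k=0}^{[g/2]}q^{-k}\sum_{\substack{l\ \mathrm{monic}\\\deg l=k}}h(l)+O(q^{5g/4}),\qquad h(l):=\prod_{P\mid l}\frac{|P|}{|P|+1}.
\]

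To evaluate the arithmetic sum, write $h=\mathbf 1*g$ with $g$ multiplicative, supported on square-free $d$, $g(P)=-1/(|P|+1)$ and $g(P^a)=0$ for $a\ge2$. Then $\sum_{\deg l=k}h(l)=q^k\sum_{\deg d\le k}g(d)|d|^{-1}$, and interchanging the order of summation,
\[
\sum_{k=0}^{[g/2]}q^{-k}\sum_{\deg l=k}h(l)=\sum_{\deg d\le[g/2]}\frac{g(d)}{|d|}\big([g/2]+1-\deg d\big)=([g/2]+1)\sum_{\deg d\le[g/2]}\frac{g(d)}{|d|}-\sum_{\deg d\le[g/2]}\frac{g(d)\deg d}{|d|}.
\]
Since $|g(d)|/|d|\le|d|^{-2}$, extending both sums to all monic $d$ incurs errors $O(q^{-g/2})$ and $O(g\,q^{-g/2})$; the completed sums are $\sum_d g(d)|d|^{-1}=\prod_P\big(1-\tfrac{1}{|P|(|P|+1)}\big)=P(1)$ and, splitting $\deg d=\sum_{P\mid d}\deg P$ and isolating the factor at each $P$, $\sum_d\frac{g(d)\deg d}{|d|}=P(1)\sum_P\frac{g(P)\deg P}{|P|+g(P)}=-P(1)\sum_P\frac{\deg P}{|P|(|P|+1)-1}$. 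Thus the arithmetic sum equals $([g/2]+1)P(1)+P(1)\sum_P\frac{\deg P}{|P|(|P|+1)-1}+O(g\,q^{-g/2})$, and multiplying by $|D|/\zeta_A(2)$ gives the asserted main term with total error $O(g\,q^{3g/2})$, which absorbs the $O(q^{5g/4})$ above.

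\textbf{Where the work lies.} No single step is deep: the engine is the exact formula $M_l(m)=q^m\Phi(l)/|l|$ for $m\ge\deg l$ and its judicious use. The real effort is the bookkeeping of the four error terms — the two truncations of the $A$-Möbius sum and the two truncations of the $d$-sums — and checking that all of them fit inside $O(g\,q^{3g/2})$. The binding constraint is the slowest of these, $\sum_{\deg d>[g/2]}(\deg d)\,|d|^{-2}\asymp[g/2]\,q^{-[g/2]}$, which after multiplication by $|D|=q^{2g+1}$ produces precisely the stated error; every other contribution is of strictly smaller order.
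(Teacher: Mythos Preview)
Your proof is correct and follows essentially the same route as the paper's: the M\"obius sieve for $N_l$ (the paper's Proposition~\ref{prop:2} and Lemmas~\ref{lem:4}--\ref{lem:6}), the identification of $h(l)=\prod_{P\mid l}(1+|P|^{-1})^{-1}$ and its convolution decomposition $h=\mathbf 1*g$ (the paper's Lemmas~\ref{lem:7}--\ref{lem:8}), the interchange of the $k$-- and $d$--sums, and the extension to all $d$ with tails $O(gq^{-g/2})$ (Lemmas~\ref{lem:10}--\ref{lem:12} and Proposition~\ref{prop:3}). Two cosmetic differences are worth noting: you are more careful than the paper about the range of validity of $M_l(m)=q^m\Phi(l)/|l|$, explicitly isolating the complementary range $\deg A>(2g+1-k)/2$ and bounding its contribution by $O(q^{5g/4})$ (the paper's Lemma~\ref{lem:4} tacitly assumes $d\ge\deg(\mathrm{rad}\,l)$); and you evaluate $\sum_d g(d)\deg(d)/|d|$ by splitting $\deg d=\sum_{P\mid d}\deg P$, whereas the paper obtains the same identity via logarithmic differentiation of the Euler product. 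Neither difference changes the argument in substance.
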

We will need some preliminary lemmas.

\subsection{Counting Square--free polynomials which are coprime to another monic polynomial.}
We will prove here the following proposition.
\begin{prop}\label{prop:2}
\begin{equation}
\sum_{\substack{D\in\mathcal{H}_{2g+1,q} \\ (D,l)=1}}1=\frac{|D|}{\zeta_{A}(2)\prod_{P|l}(1+|P|^{-1})}+O\left(\sqrt{|D|}\frac{\Phi(l)}{|l|}\right).
\end{equation}
\end{prop}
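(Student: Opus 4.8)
The plan is to count square-free monic $D$ of degree $2g+1$ that are coprime to a fixed monic polynomial $l$, using the standard Möbius sieve for square-freeness over $\mathbb{F}_q[x]$. First I would write
\[
\sum_{\substack{D\in\mathcal{H}_{2g+1,q}\\(D,l)=1}}1
=\sum_{\substack{D\ \mathrm{monic}\\ \deg D=2g+1\\ (D,l)=1}}\sum_{A^2\mid D}\mu(A)
=\sum_{\substack{A\ \mathrm{monic}\\ (A,l)=1}}\mu(A)\#\{D\ \mathrm{monic},\ \deg D=2g+1,\ A^2\mid D,\ (D,l)=1\},
\]
after noting that only $A$ coprime to $l$ contribute. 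Writing $D=A^2 B$ with $B$ monic of degree $2g+1-2\deg A$ and imposing $(B,l)=1$ (since $(A,l)=1$), the inner count is the number of monic $B$ of that degree coprime to $l$, which by inclusion–exclusion on divisors of $l$ equals $\sum_{e\mid l}\mu(e)q^{2g+1-2\deg A-\deg e}$ whenever the exponent is nonnegative, i.e. when $\deg e\le 2g+1-2\deg A$. Summing $\sum_{e\mid l}\mu(e)q^{-\deg e}=\prod_{P\mid l}(1-|P|^{-1})=\Phi(l)/|l|$, the main contribution is $q^{2g+1}\tfrac{\Phi(l)}{|l|}\sum_{(A,l)=1}\mu(A)q^{-2\deg A}$, with $A$ running over all such polynomials.

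Next I would handle the truncation. The constraint $\deg A\le g$ (from $\deg(A^2)\le 2g+1$) and the constraint $\deg e\le 2g+1-2\deg A$ both cut the sums off; the standard move is to extend the $A$-sum to infinity and bound the tail. The completed sum is
\[
\sum_{\substack{A\ \mathrm{monic}\\ (A,l)=1}}\frac{\mu(A)}{q^{2\deg A}}
=\prod_{P\nmid l}\bigl(1-|P|^{-2}\bigr)
=\frac{1}{\zeta_A(2)}\prod_{P\mid l}\bigl(1-|P|^{-2}\bigr)^{-1}
=\frac{1}{\zeta_A(2)}\prod_{P\mid l}\frac{1}{(1-|P|^{-1})(1+|P|^{-1})}.
\]
Multiplying by $q^{2g+1}\Phi(l)/|l|=|D|\prod_{P\mid l}(1-|P|^{-1})$ gives exactly $\dfrac{|D|}{\zeta_A(2)\prod_{P\mid l}(1+|P|^{-1})}$, the claimed main term. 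The error from extending the $A$-sum is $\ll |D|\sum_{\deg A>g}q^{-2\deg A}\#\{A:\deg A = k\}\ll |D|\sum_{k>g}q^{-k}\ll |D|q^{-g}$, which is $\ll q^{g+1}$ and absorbed into $O(\sqrt{|D|}\,\Phi(l)/|l|)$ once we check $\sqrt{|D|}=q^{g+1/2}$ dominates (using $\Phi(l)/|l|\ge$ a fixed constant only when $l$ is small — but in fact $\Phi(l)/|l|$ can be tiny, so more care is needed here). The secondary error, from the $e$-sum truncation $\deg e > 2g+1-2\deg A$, only matters when $\deg A$ is close to $g$ and $\deg l$ is comparable to $g$; bounding the number of such $(A,e)$ pairs crudely by $\sum_{2\deg A + \deg e > 2g+1} 1 \cdot q^{2g+1-2\deg A-\deg e}$ restricted appropriately, together with $\sum_{e\mid l}1 = 2^{\omega(l)}$, gives a contribution $\ll \sqrt{|D|}\,\tfrac{\Phi(l)}{|l|}$ after rearrangement.

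The main obstacle I anticipate is precisely the bookkeeping of these two truncation errors against the stated error term $O(\sqrt{|D|}\,\Phi(l)/|l|)$, which is delicate because $\Phi(l)/|l|$ is not bounded below and $\deg l$ is not assumed small relative to $g$; one must track the interplay between $\deg A$, $\deg e$, and $\deg l$ carefully rather than bounding things separately. The cleanest route is probably to first establish the identity $\sum_{\substack{B\ \mathrm{monic},\ \deg B = m\\ (B,l)=1}} 1 = \sum_{e\mid l,\ \deg e\le m}\mu(e)q^{m-\deg e}$ exactly (valid for $m\ge 0$), substitute, and only then split off the completed Euler product, writing the error as a single double sum over $(A,e)$ with $\deg e\le \deg l$ and either $\deg A>g$ or $2\deg A+\deg e>2g+1$; estimating that double sum using $q^{2g+1-2\deg A-\deg e}\le q^{2g+1-2\deg A}$ and summing the geometric series in $\deg A$ yields a bound of the shape $\sqrt{|D|}\sum_{e\mid l}q^{-\deg e} = \sqrt{|D|}\,\Phi(l)/|l|$, as required. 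Everything else is routine manipulation of Euler products and geometric series over $\mathbb{F}_q[x]$.
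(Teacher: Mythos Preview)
Your approach is the same as the paper's --- M\"obius sieve for square-freeness, count the monic $B$ of the remaining degree coprime to $l$, complete the $A$-sum to an Euler product --- but the paper organizes the argument more efficiently, and this is exactly what resolves the difficulty you flag.

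The paper first proves, as a standalone lemma, the \emph{exact} identity
\[
\#\{B\in V_m:(B,l)=1\}=q^m\,\frac{\Phi(l)}{|l|}
\]
(your inclusion--exclusion sum $\sum_{e\mid l}\mu(e)q^{m-\deg e}$ collapsed to closed form). Substituting this for the inner count gives
\[
\sum_{\substack{D\in\mathcal{H}_{2g+1,q}\\(D,l)=1}}1 \;=\; |D|\,\frac{\Phi(l)}{|l|}\sum_{\substack{Q\ \mathrm{monic}\\ \deg Q\le g\\ (Q,l)=1}}\frac{\mu(Q)}{|Q|^2},
\]
so the factor $\Phi(l)/|l|$ is pulled outside \emph{before} any truncation. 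Now the only error comes from completing the $Q$-sum, and that tail is $O(q^{-g-1/2})$ uniformly in $l$; multiplying by the prefactor $|D|\,\Phi(l)/|l|$ gives $O(\sqrt{|D|}\,\Phi(l)/|l|)$ on the nose. There is no ``secondary error'' from an $e$-truncation to manage, and the worry that $\Phi(l)/|l|$ can be tiny evaporates because it sits as a global multiplicative factor throughout.

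By contrast, in your organization the $A$-tail error is bounded independently of $\Phi(l)/|l|$, which is why you cannot match the stated error term without further work. Your proposed fix at the end also contains a slip: $\sum_{e\mid l}q^{-\deg e}$ (over square-free $e$) equals $\prod_{P\mid l}(1+|P|^{-1})$, not $\Phi(l)/|l|=\prod_{P\mid l}(1-|P|^{-1})$, so the bound you sketch is $O\bigl(\sqrt{|D|}\prod_{P\mid l}(1+|P|^{-1})\bigr)$, which is the wrong shape. The simplest remedy is precisely the paper's: collapse the $e$-sum exactly to $\Phi(l)/|l|$ first, then truncate only in $A$.
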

We will need the following three lemmas.
\begin{lem}\label{lem:4}
Let $V_{d}=\{D\in\mathbb{F}_{q}[x]: D \ \mathrm{monic}, \ \mathrm{deg}(D)=d\}$. Then,
\begin{equation}
\#\{D\in V_{d}: (D,l)=1\}=q^{d}\frac{\Phi(l)}{|l|}.
\end{equation}
\end{lem}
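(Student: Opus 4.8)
The plan is to count monic polynomials $D$ of degree $d$ with $(D,l)=1$ by working modulo $l$. First I would observe that the condition $(D,l)=1$ depends only on the residue class of $D$ modulo $l$, so I partition $V_d$ according to $D\bmod l$. There are exactly $\Phi(l)$ residue classes modulo $l$ that are coprime to $l$ (this is the definition of $\Phi$ for $\mathbb{F}_q[x]$, together with the fact that the units of $\mathbb{F}_q[x]/(l)$ are precisely the classes represented by polynomials of degree less than $\deg l$ that are coprime to $l$). The task then reduces to showing that each residue class modulo $l$ contains exactly $q^{d}/|l|$ monic polynomials of degree $d$.

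For the equidistribution count, I would use that $d = \deg D > \deg l$ in the regime of interest (in our application $\deg l = n/2 \le g/2 < 2g+1 = d$; if $d \le \deg l$ one can argue separately or note the statement is applied only for large $d$). Writing $D = l\cdot Q + r$ with $\deg r < \deg l$, the leading coefficient of $D$ is determined by that of $Q$, so requiring $D$ monic of degree $d$ is the same as requiring $Q$ monic of degree $d - \deg l$. For each fixed remainder $r$ (of which there are $|l| = q^{\deg l}$ choices, but we restrict to the $\Phi(l)$ that give $(D,l)=(r,l)=1$), the number of monic $Q$ of degree $d-\deg l$ is $q^{\,d-\deg l}$. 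Hence
\begin{equation}
\#\{D\in V_d : (D,l)=1\} = \Phi(l)\cdot q^{\,d-\deg l} = q^{d}\frac{\Phi(l)}{|l|},
\end{equation}
using $|l| = q^{\deg l}$. This is in fact an exact identity, with no error term, when $d \ge \deg l$.

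I do not anticipate a serious obstacle here; this is a routine lattice-point / residue-class count. The only point requiring a little care is the bookkeeping of the monic condition under division by $l$, and the mild hypothesis $d\ge \deg l$ needed for the clean formula — but in all applications within the paper $\deg l \le g$ while $d = 2g+1$, so this is automatic and the lemma as stated holds. (Strictly, for $d < \deg l$ the count would instead be the number of monic degree-$d$ polynomials coprime to $l$, which is bounded by $q^d$ and causes no difficulty, but this case does not arise.)
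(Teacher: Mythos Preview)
Your argument is correct, but it takes a different route from the paper. The paper detects the coprimality condition $(D,l)=1$ by inserting the M\"obius sum $\sum_{h\mid (D,l)}\mu(h)$, interchanges the order of summation, and for each squarefree $h\mid l$ counts the monic $D$ of degree $d$ divisible by $h$ (there are $q^{d-\deg h}$ of them), obtaining
\[
\sum_{h\mid l}\mu(h)\,q^{d-\deg h}=q^{d}\prod_{P\mid l}\Bigl(1-\frac{1}{|P|}\Bigr)=q^{d}\frac{\Phi(l)}{|l|}
\]
via Lemma~\ref{lem:2}. Your approach instead partitions $V_d$ by residue class modulo $l$ and uses the division algorithm to show that each class is hit by exactly $q^{d-\deg l}$ monic polynomials of degree $d$. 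Both arguments are standard and both tacitly require $d\ge \deg l$ (in the paper's version this is hidden in the step $\#\{D\in V_d: h\mid D\}=q^{d-\deg h}$); you deserve credit for making this hypothesis explicit and noting that it is satisfied in every application. Your method is slightly more elementary in that it avoids M\"obius inversion entirely, while the paper's method is the more traditional sieve computation and dovetails with the style of the surrounding lemmas.
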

\begin{proof}
\begin{eqnarray}
\#\{D\in V_{d}:(D,l)=1\}& = &\sum_{\substack{D \ \mathrm{monic} \\ \mathrm{deg}(D)=d \\ (D,l)=1}}1=\sum_{\substack{D \ \mathrm{monic} \\ \mathrm{deg}(D)=d}}\sum_{h|(D,l)}\mu(h)\nonumber\\
& = & \sum_{h|l}\mu(h)\sum_{\substack{D \ \mathrm{monic} \\ \mathrm{deg}(D)=d \\ h|D}}1=\sum_{h|l}\mu(h)\sum_{\substack{m \ \mathrm{monic} \\ \mathrm{deg}(m)=d-\mathrm{deg}h}}1\nonumber\\
& = & \sum_{h|l}\mu(h)q^{d-\mathrm{deg}(h)}=q^{d}\sum_{h|l}\frac{\mu(h)}{|h|}\nonumber\\
\label{eq:5.5}& = & q^{d}\prod_{\substack{P \\ P|l}}\left(1-\frac{1}{|P|}\right)=q^{d}\frac{\Phi(l)}{|l|}
\end{eqnarray}
where we used Lemma \ref{lem:2} in \eqref{eq:5.5}.
\end{proof}

\begin{lem}\label{lem:5}
We have,
\begin{equation}
\sum_{\substack{Q \ \mathrm{monic} \\ \mathrm{deg}(Q)>\frac{2g+1}{2} \\ (Q,l)=1}}\frac{\mu(Q)}{|Q|^{2}}\ll q^{-1/2}q^{-g}.
\end{equation}
\end{lem}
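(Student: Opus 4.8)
The plan is to estimate the sum $\sum_{Q \text{ monic},\, \deg Q > (2g+1)/2,\, (Q,l)=1} \mu(Q)/|Q|^2$ by crudely discarding the Möbius function and the coprimality condition, replacing everything with the trivial bound $|\mu(Q)| \le 1$. First I would group the monic polynomials $Q$ by degree: for each $j$ there are exactly $q^j$ monic polynomials of degree $j$, each contributing at most $q^{-2j}$ in absolute value. Thus the whole sum is bounded in absolute value by $\sum_{j > (2g+1)/2} q^j \cdot q^{-2j} = \sum_{j > (2g+1)/2} q^{-j}$.

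Next I would evaluate this geometric tail. Since $\deg Q$ is an integer and $(2g+1)/2 = g + \tfrac12$, the condition $\deg Q > (2g+1)/2$ means $\deg Q \ge g+1$. Hence
\begin{equation}
\Bigl| \sum_{\substack{Q \ \mathrm{monic} \\ \mathrm{deg}(Q)>\frac{2g+1}{2} \\ (Q,l)=1}}\frac{\mu(Q)}{|Q|^{2}} \Bigr| \le \sum_{j \ge g+1} q^{-j} = \frac{q^{-(g+1)}}{1-q^{-1}} = \frac{q^{-g}}{q-1}.
\end{equation}
This is $\ll q^{-g}$, with an implied constant depending only on $q$. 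To match the sharper claimed bound $q^{-1/2}q^{-g}$ one simply notes $\tfrac{1}{q-1} \le q^{-1/2}$ whenever $q$ is large enough, or more honestly that $\tfrac{q^{-g}}{q-1} = q^{-1/2} \cdot \tfrac{q^{1/2}}{q-1} \cdot q^{-g} \ll q^{-1/2} q^{-g}$ since $q^{1/2}/(q-1)$ is bounded (for $q \ge 3$, say, it is at most $\sqrt 3/2 < 1$). Either way the stated estimate follows, since throughout the paper the implied constant in $\ll$ is allowed to depend on $q$.

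There is essentially no obstacle here: the lemma is a purely formal geometric-series estimate, and the only mild subtlety is keeping track of the floor — that $\deg Q > g + \tfrac12$ forces $\deg Q \ge g+1$, which is what produces the $q^{-g}$ rather than $q^{-g+1/2}$ from the boundary term. The coprimality condition $(Q,l)=1$ and the sign of $\mu(Q)$ only help, so discarding them loses nothing essential for an upper bound of this quality; this is exactly the kind of slack that is harmless because the error term $O(gq^{3g/2})$ in Proposition~\ref{prop:1} is far larger than any savings one could extract by being more careful here.
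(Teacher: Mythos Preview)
Your proof is correct and follows essentially the same approach as the paper: bound $|\mu(Q)|\le 1$, drop the coprimality condition, group monic $Q$ by degree, and bound the resulting geometric tail $\sum_{n>(2g+1)/2}q^{-n}$. You are in fact slightly more explicit than the paper about why the tail is $\ll q^{-1/2}q^{-g}$ (noting that $\deg Q\ge g+1$ and handling the constant $q^{1/2}/(q-1)$), whereas the paper simply writes the final estimate without comment.
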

\begin{proof}
\begin{eqnarray}
\sum_{\substack{Q \ \mathrm{monic} \\ \mathrm{deg}(Q)>\frac{2g+1}{2} \\ (Q,l)=1}}\frac{\mu(Q)}{|Q|^{2}} & \leq & \sum_{\substack{Q \ \mathrm{monic} \\ \mathrm{deg}(Q)>\frac{2g+1}{2} \\ (Q,l)=1}}\frac{1}{|Q|^{2}}\nonumber\\
& \leq & \sum_{n>\frac{2g+1}{2}}\sum_{\substack{Q \ \mathrm{monic} \\ \mathrm{deg}(Q)=n}}\frac{1}{|Q|^{2}}\nonumber\\
& = & \sum_{n>\frac{2g+1}{2}}\frac{1}{q^{n}}\ll q^{-1/2}q^{-g}.
\end{eqnarray}
\end{proof}

\begin{lem}\label{lem:6}
We have that,
\begin{equation}
\sum_{\substack{Q \ \mathrm{monic} \\ \mathrm{deg}(Q)\leq\frac{2g+1}{2} \\ (Q,l)=1}}\frac{\mu(Q)}{|Q|^{2}}=\frac{1}{\zeta_{A}(2)}\frac{1}{\prod_{P|l}(1-1/|P|^{2})}+O(q^{-1/2}q^{-g}).
\end{equation}
\end{lem}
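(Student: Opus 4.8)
The plan is to complete the Dirichlet-type sum $\sum_{(Q,l)=1}\mu(Q)/|Q|^2$ over \emph{all} monic $Q$, evaluate that complete sum by an Euler product, and then bound the tail using Lemma \ref{lem:5}. First I would observe that, by multiplicativity of $\mu$ and the coprimality condition, the full series factors as an Euler product over primes not dividing $l$:
\begin{equation*}
\sum_{\substack{Q \ \mathrm{monic} \\ (Q,l)=1}}\frac{\mu(Q)}{|Q|^{2}}
=\prod_{P\nmid l}\left(1-\frac{1}{|P|^{2}}\right)
=\frac{\prod_{P}\left(1-|P|^{-2}\right)}{\prod_{P\mid l}\left(1-|P|^{-2}\right)}
=\frac{1}{\zeta_{A}(2)}\cdot\frac{1}{\prod_{P\mid l}\left(1-|P|^{-2}\right)},
\end{equation*}
where the last equality uses the Euler product \eqref{eq:zetaA} for $\zeta_{A}$ evaluated at $s=2$, which converges absolutely since $\mathfrak{R}(s)=2>1$. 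This identifies the claimed main term exactly.

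Next I would split the complete sum into the range $\mathrm{deg}(Q)\le\frac{2g+1}{2}$, which is the quantity we want, plus the tail $\mathrm{deg}(Q)>\frac{2g+1}{2}$. Rearranging, the desired sum equals the complete Euler product minus the tail, and Lemma \ref{lem:5} gives precisely that the tail is $\ll q^{-1/2}q^{-g}$. Therefore
\begin{equation*}
\sum_{\substack{Q \ \mathrm{monic} \\ \mathrm{deg}(Q)\leq\frac{2g+1}{2} \\ (Q,l)=1}}\frac{\mu(Q)}{|Q|^{2}}
=\frac{1}{\zeta_{A}(2)}\frac{1}{\prod_{P\mid l}(1-1/|P|^{2})}+O\!\left(q^{-1/2}q^{-g}\right),
\end{equation*}
which is the statement. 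The only mild technical point is justifying the rearrangement of the infinite series into an Euler product, but since $\sum_{Q}|Q|^{-2}=\sum_{n\ge0}q^{n}q^{-2n}=\sum_{n\ge0}q^{-n}<\infty$ the series converges absolutely and all manipulations are legitimate.

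I do not anticipate a genuine obstacle here: the lemma is essentially a packaging of the absolutely convergent Euler-product evaluation of $1/\zeta_A(2)$ together with the already-established tail bound of Lemma \ref{lem:5}. If anything, the step requiring the most care is bookkeeping the local factor $\prod_{P\mid l}(1-|P|^{-2})^{-1}$ correctly (as opposed to, say, $\prod_{P\mid l}(1+|P|^{-1})$, which is the shape that appears in Proposition \ref{prop:2} after combining with Lemma \ref{lem:4}); keeping these two Euler factors distinct is the one place a sign or exponent slip could creep in.
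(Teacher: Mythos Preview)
Your proposal is correct and follows essentially the same approach as the paper: write the truncated sum as the complete sum over all monic $Q$ coprime to $l$ minus the tail, evaluate the complete sum via the Euler product $\prod_{P\nmid l}(1-|P|^{-2})=\zeta_A(2)^{-1}\prod_{P\mid l}(1-|P|^{-2})^{-1}$, and bound the tail by Lemma~\ref{lem:5}. The paper's proof is slightly terser but otherwise identical in structure and content.
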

\begin{proof}

\begin{eqnarray}
\sum_{\substack{Q \ \mathrm{monic} \\ \mathrm{deg}(Q)\leq\frac{2g+1}{2} \\ (Q,l)=1}}\frac{\mu(Q)}{|Q|^{2}} & = & \sum_{\substack{Q \ \mathrm{monic} \\ (Q,l)=1}}\frac{\mu(Q)}{|Q|^{2}}-\sum_{\substack{Q \ \mathrm{monic} \\ \mathrm{deg}(Q)>\frac{2g+1}{2} \\ (Q,l)=1}}\frac{\mu(Q)}{|Q|^{2}}\nonumber\\
& = & \prod_{P\nmid l}\left(1-\frac{1}{|P|^{2}}\right)-\sum_{\substack{Q \ \mathrm{monic} \\ \mathrm{deg}(Q)>\frac{2g+1}{2} \\ (Q,l)=1}}\frac{\mu(Q)}{|Q|^{2}},
\end{eqnarray}
and
\begin{eqnarray}
\prod_{P\nmid l}\left(1-\frac{1}{|P|^{2}}\right) & = & \prod_{P}\left(1-\frac{1}{|P|^{2}}\right)\prod_{P\mid l}\left(1-\frac{1}{|P|^{2}}\right)^{-1}\nonumber\\
& = & \frac{1}{\zeta_{A}(2)}\frac{1}{\prod_{P\mid l}(1-1/|P|^{2})}.
\end{eqnarray}
Thus,
\begin{equation}
\sum_{\substack{Q \ \mathrm{monic} \\ \mathrm{deg}(Q)\leq\frac{2g+1}{2} \\ (Q,l)=1}}\frac{\mu(Q)}{|Q|^{2}}=\frac{1}{\zeta_{A}(2)}\frac{1}{\prod_{P|l}(1-1/|P|^{2})}-\sum_{\substack{Q \ \mathrm{monic} \\ \mathrm{deg}(Q)>\frac{2g+1}{2} \\ (Q,l)=1}}\frac{\mu(Q)}{|Q|^{2}},
\end{equation}
and using the estimate of Lemma \ref{lem:5} proves the result.
\end{proof}

\begin{proof}[Proof of Proposition \ref{prop:2}]
Following the proof of Lemma 4.2 in \cite{BucurIMRN} we have that
\begin{eqnarray}
\sum_{\substack{D\in\mathcal{H}_{2g+1,q} \\ (D,l)=1}}1 &=& \sum_{\substack{D\in V_{2g+1} \\ (D,l)=1}}\sum_{Q^{2}\mid D}\mu(Q)=\sum_{\substack{Q \ \mathrm{monic} \\ \mathrm{deg}(Q)\leq\frac{2g+1}{2} \\ (Q,l)=1}}\mu(Q)\sum_{\substack{D\in V_{2g+1-2\mathrm{deg}(Q)} \\ (D,l)=1}}1\nonumber\\
&=& \sum_{\substack{Q \ \mathrm{monic} \\ \mathrm{deg}(Q)\leq\frac{2g+1}{2} \\ (Q,l)=1}}\mu(Q)\#\{D\in V_{2g+1-2\mathrm{deg}(Q)}:(D,l)=1\}.
\end{eqnarray}
By Lemma \ref{lem:4}, we have,
\begin{eqnarray}
\sum_{\substack{D\in\mathcal{H}_{2g+1,q} \\ (D,l)=1}}1 & = & \sum_{\substack{Q \ \mathrm{monic} \\ \mathrm{deg}(Q)\leq\frac{2g+1}{2} \\ (Q,l)=1}}\mu(Q)q^{2g+1-2\mathrm{deg}(Q)}\frac{\Phi(l)}{|l|}\nonumber\\
& = & |D|\frac{\Phi(l)}{|l|}\sum_{\substack{Q \ \mathrm{monic} \\ \mathrm{deg}(Q)\leq\frac{2g+1}{2} \\ (Q,l)=1}}\frac{\mu(Q)}{|Q|^{2}}.
\end{eqnarray}
Invoking Lemma \ref{lem:6} we obtain,
\begin{eqnarray}
\sum_{\substack{D\in\mathcal{H}_{2g+1,q} \\ (D,l)=1}}1 & = & |D|\frac{\Phi(l)}{|l|}\left(\frac{1}{\zeta_{A}(2)}\frac{1}{\prod_{P|l}(1-1/|P|^{2})}+O(q^{-1/2}q^{-g})\right)\nonumber\\
& = & |D|\frac{\Phi(l)}{|l|}\frac{1}{\zeta_{A}(2)}\frac{1}{\prod_{P|l}(1-1/|P|^{2})}+O\left(|D|\frac{\Phi(l)}{|l|}q^{-1/2}q^{-g}\right),
\end{eqnarray}
and using $\displaystyle{\frac{\Phi(l)}{|l|}=\prod_{P|l}(1-|P|^{-1})}$, we end up with
\begin{equation}
\sum_{\substack{D\in\mathcal{H}_{2g+1,q} \\ (D,l)=1}}1=\frac{|D|}{\zeta_{A}(2)\prod_{P|l}(1+|P|^{-1})}+O\left(\sqrt{|D|}\frac{\Phi(l)}{|l|}\right),
\end{equation}
which proves Proposition \ref{prop:2}.
\end{proof}

\subsection{A Sum Over Monic Polynomials.}
In this section we prove the following two lemmas.

\begin{lem}\label{lem:7}
We have that,
\begin{equation}
\prod_{\substack{P \\ P|l}}(1+|P|^{-1})^{-1}=\sum_{\substack{d \ \mathrm{monic} \\ d|l}}\mu(d)\prod_{P|d}\frac{1}{|P|+1}.
\end{equation}
\end{lem}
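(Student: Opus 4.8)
The plan is to recognize the left-hand side as a multiplicative function of $l$ evaluated as a product over the primes dividing $l$, and to match it against the Dirichlet-convolution-type sum on the right. First I would observe that both sides depend only on the \emph{radical} of $l$, i.e.\ on the set of monic irreducible $P$ with $P\mid l$, since the left-hand side is literally $\prod_{P\mid l}(1+|P|^{-1})^{-1}$ and the sum on the right runs over squarefree divisors $d\mid l$ (the nonzero contributions have $\mu(d)\neq 0$). So without loss of generality I may assume $l$ is squarefree, say $l=P_1P_2\cdots P_r$ with the $P_i$ distinct monic irreducibles.

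Next, both sides are multiplicative in $l$: if $l=l_1 l_2$ with $(l_1,l_2)=1$, the left-hand side factors as the product of the corresponding expressions for $l_1$ and $l_2$, and the right-hand side factors the same way because divisors $d$ of $l$ correspond bijectively to pairs $(d_1,d_2)$ with $d_i\mid l_i$, with $\mu(d)=\mu(d_1)\mu(d_2)$ and $\prod_{P\mid d}\frac{1}{|P|+1}=\big(\prod_{P\mid d_1}\frac{1}{|P|+1}\big)\big(\prod_{P\mid d_2}\frac{1}{|P|+1}\big)$. Hence it suffices to verify the identity when $l=P$ is a single monic irreducible. In that case the right-hand side is the two-term sum over $d\in\{1,P\}$, namely $\mu(1)\cdot 1 + \mu(P)\cdot\frac{1}{|P|+1} = 1-\frac{1}{|P|+1} = \frac{|P|}{|P|+1} = (1+|P|^{-1})^{-1}$, which is exactly the left-hand side. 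Multiplying over the primes dividing $l$ then gives the general case.

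Alternatively — and this is essentially the same computation packaged differently — one can argue directly: expand the sum on the right as $\sum_{d\mid l}\mu(d)\prod_{P\mid d}\frac{1}{|P|+1} = \prod_{P\mid l}\left(1 - \frac{1}{|P|+1}\right)$, using the standard fact that a sum of a multiplicative function over squarefree divisors of $l$ factors as an Euler product over $P\mid l$ (each prime contributes its $d=1$ term, which is $1$, plus its $d=P$ term, which is $-\frac{1}{|P|+1}$). Then $1-\frac{1}{|P|+1} = \frac{|P|}{|P|+1} = \frac{1}{1+|P|^{-1}}$, so the product equals $\prod_{P\mid l}(1+|P|^{-1})^{-1}$, which is the left-hand side.

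There is no real obstacle here; the lemma is a routine multiplicativity/Euler-product manipulation. The only point requiring a word of care is that the sum on the right is genuinely finite and supported on squarefree $d$ (so that the factorization over $P\mid l$ has exactly two terms per prime), and that both sides are insensitive to the multiplicities of the primes in $l$ — both of which are immediate from the definition of $\mu$ in \eqref{eq:3.3} and from the fact that $P\mid l$ is a condition on the radical of $l$ only. I would therefore present the proof in the second, direct form, as a one-line Euler-product evaluation of the right-hand side.
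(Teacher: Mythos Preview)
Your proposal is correct, and your second (direct) form is essentially the paper's own proof: the paper rewrites the left-hand side as $\prod_{P\mid l}\bigl(1-\tfrac{1}{|P|+1}\bigr)$ and expands the product over the primes $P_1,\ldots,P_m$ dividing $l$ to obtain the divisor sum, which is exactly your Euler-product evaluation run in the opposite direction. There is nothing to add.
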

\begin{proof}
Obviously,
$$\prod_{\substack{P \\ P|l}}(1+|P|^{-1})^{-1}=\prod_{\substack{P \\ P|l}}\left(1-\frac{1}{|P|+1}\right).$$
Let $P_{1},\ldots,P_{m}$ be the primes that divide $l$.  Then
\begin{multline}
\prod_{\substack{P \\ P|l}}\left(1-\frac{1}{|P|+1}\right)= \left(1-\frac{1}{|P_{1}|+1}\right)\left(1-\frac{1}{|P_{2}|+1}\right)\cdots\left(1-\frac{1}{|P_{m}|+1}\right)\\
=1-\left(\frac{1}{|P_{1}|+1}+\cdots+\frac{1}{|P_{m}|+1}\right)+\left(\frac{1}{|P_{1}|+1}\frac{1}{|P_{2}|+1}+\cdots\right)-\cdots \ \ \ \ \ \ \ \ \ \ \ \ \ \ \ \ \ \ \ \ \ \ \ \ \ \ \ \ \ \ \ \ \ \ \ \ \ \ \ \ \ \ \ \ \ \ \ \ \ \ \ \ \ \ \ \\
=\sum_{\substack{d \ \mathrm{monic} \\ d|l}}\mu(d)\prod_{P|d}\frac{1}{|P|+1}, \ \ \ \ \ \ \ \ \ \ \ \ \ \ \ \ \ \ \ \ \ \ \ \ \ \ \ \ \ \ \ \ \ \ \ \ \ \ \ \ \ \ \ \ \ \ \ \ \ \ \ \ \ \ \ \ \ \ \ \ \ \ \ \ \ \ \ \ \ \ \ \ \ \ \ \ \ \ \ \ \ \ \ \ \ \ \ \ \ \ \ \ \ \ \ \ \ \ \ \ \ \ \ \ \ \ \ \ \ \
\end{multline}
which proves the lemma.
\end{proof}

\begin{lem}\label{lem:8}
We have,
\begin{equation}
\sum_{\substack{l \ \mathrm{monic} \\ \mathrm{deg}(l)=n/2}}\prod_{P|l}(1+|P|^{-1})^{-1}=q^{n/2}\sum_{\substack{d \ \mathrm{monic} \\ \mathrm{deg}(d)\leq n/2}}\frac{\mu(d)}{|d|}\prod_{P|d}\frac{1}{|P|+1}.
\end{equation}
\end{lem}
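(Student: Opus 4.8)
The plan is to substitute the identity from Lemma~\ref{lem:7} for $\prod_{P|l}(1+|P|^{-1})^{-1}$ into the left-hand sum, interchange the order of summation between $l$ and its divisor $d$, and then evaluate the resulting inner count of monic polynomials $l$ of fixed degree that are divisible by $d$. Concretely, writing each monic $l$ of degree $n/2$ as $l = d\cdot m$ with $m$ monic of degree $n/2 - \deg(d)$, the number of such $m$ is exactly $q^{n/2-\deg(d)} = q^{n/2}/|d|$ whenever $\deg(d)\le n/2$, and there are none otherwise; this is the same elementary count already used in the proof of Lemma~\ref{lem:4}.

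First I would write
\begin{equation}
\sum_{\substack{l \ \mathrm{monic} \\ \mathrm{deg}(l)=n/2}}\prod_{P|l}(1+|P|^{-1})^{-1}
=\sum_{\substack{l \ \mathrm{monic} \\ \mathrm{deg}(l)=n/2}}\ \sum_{\substack{d \ \mathrm{monic} \\ d|l}}\mu(d)\prod_{P|d}\frac{1}{|P|+1}.
\end{equation}
Next I would swap the two sums, so that $d$ ranges over all monic polynomials with $\deg(d)\le n/2$ (the constraint being forced by $d|l$ and $\deg(l)=n/2$), and for each such $d$ the variable $l$ ranges over monic multiples of $d$ of degree $n/2$:
\begin{equation}
=\sum_{\substack{d \ \mathrm{monic} \\ \mathrm{deg}(d)\leq n/2}}\mu(d)\prod_{P|d}\frac{1}{|P|+1}\ \#\{l \ \mathrm{monic}: \mathrm{deg}(l)=n/2,\ d|l\}.
\end{equation}
Then I would evaluate the inner cardinality by the change of variables $l=dm$, giving $q^{n/2-\deg(d)}=q^{n/2}/|d|$, and pull the common factor $q^{n/2}$ out of the $d$-sum to reach exactly the claimed right-hand side.

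There is no real obstacle here; the only point requiring a word of care is the justification that, after interchanging summation, the divisor $d$ is automatically confined to $\deg(d)\le n/2$ and that for each fixed $d$ in this range the inner set of multiples is nonempty with the stated size — both of which follow immediately from unique factorization in $\mathbb{F}_q[x]$ and the degree-counting already invoked in Lemma~\ref{lem:4}. Since all sums are finite, no convergence issues arise.
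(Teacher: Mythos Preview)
Your proposal is correct and follows essentially the same approach as the paper: apply Lemma~\ref{lem:7}, interchange the order of summation so that $d$ becomes the outer variable with $\deg(d)\le n/2$, count the monic multiples of $d$ of degree $n/2$ as $q^{n/2-\deg(d)}$, and factor out $q^{n/2}$. The paper's proof is line-for-line the same computation.
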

\begin{proof}
Using Lemma \ref{lem:7} we have,
\begin{eqnarray}
\sum_{\substack{l \ \mathrm{monic} \\ \mathrm{deg}(l)=n/2}}\prod_{P|l}(1+|P|^{-1})^{-1} & = & \sum_{\substack{l \ \mathrm{monic} \\ \mathrm{deg}(l)=n/2}}\sum_{\substack{d \ \mathrm{monic} \\ d|l}}\mu(d)\prod_{P|d}\frac{1}{|P|+1}\nonumber \\
& = & \sum_{\substack{d \ \mathrm{monic} \\ \mathrm{deg}(d)\leq n/2}}\sum_{\substack{l \ \mathrm{monic} \\ \mathrm{deg}(l)=n/2 \\ d|l}}\mu(d)\prod_{P|d}\frac{1}{|P|+1} \nonumber\\
& = & \sum_{\substack{d \ \mathrm{monic} \\ \mathrm{deg}(d)\leq n/2}}\mu(d)\prod_{P|d}\frac{1}{|P|+1}\sum_{\substack{l \ \mathrm{monic} \\ \mathrm{deg}(l)=n/2 \\ d|l}}1\nonumber\\
& = & \sum_{\substack{d \ \mathrm{monic} \\ \mathrm{deg}(d)\leq n/2}}\mu(d)\prod_{P|d}\frac{1}{|P|+1}q^{n/2-\mathrm{deg}(d)}\nonumber\\
& = & q^{n/2}\sum_{\substack{d \ \mathrm{monic} \\ \mathrm{deg}(d)\leq n/2}}\frac{\mu(d)}{|d|}\prod_{P|d}\frac{1}{|P|+1}.\nonumber
\end{eqnarray}
\end{proof}

\subsection{Auxiliary lemmas}
To prove Proposition \ref{prop:1}, which is the main result of this section, we will need some additional lemmas which we will now establish.

\begin{lem}
We have that,
\begin{equation}
\sum_{\substack{n=0 \\ 2|n}}^{g}q^{-n/2}\sum_{\substack{l \ \mathrm{monic} \\ \mathrm{deg}(l)=n/2}}\sqrt{|Q|}\frac{\Phi(l)}{|l|}=\sqrt{|Q|}(1-q^{-1})([g/2]+1).
\end{equation}
\end{lem}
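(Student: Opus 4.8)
The plan is to carry out a short direct computation. First I would observe that the factor $\sqrt{|Q|}$ is constant with respect to both summation variables and pull it outside the double sum; it then suffices to prove
\[
\sum_{\substack{n=0 \\ 2\mid n}}^{g} q^{-n/2} \sum_{\substack{l \ \mathrm{monic} \\ \mathrm{deg}(l)=n/2}} \frac{\Phi(l)}{|l|} = (1-q^{-1})\bigl([g/2]+1\bigr).
\]

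Next I would reindex the outer sum by setting $n = 2k$; as $n$ runs over the even integers in $[0,g]$, the index $k$ runs over all integers with $0 \le k \le [g/2]$, giving exactly $[g/2]+1$ values of $k$. For a monic $l$ with $\mathrm{deg}(l) = k$ one has $|l| = q^{k}$, so that $q^{-n/2}\,\Phi(l)/|l| = q^{-k}\cdot \Phi(l)\cdot q^{-k} = q^{-2k}\Phi(l)$, and the left-hand side becomes
\[
\sum_{k=0}^{[g/2]} q^{-2k} \sum_{\substack{l \ \mathrm{monic} \\ \mathrm{deg}(l)=k}} \Phi(l).
\]
I would then apply Lemma \ref{lem:3} to evaluate the inner sum as $q^{2k}(1-q^{-1})$. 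The powers of $q$ cancel term by term, leaving $\sum_{k=0}^{[g/2]}(1-q^{-1}) = (1-q^{-1})([g/2]+1)$, and reinstating the factor $\sqrt{|Q|}$ yields the claimed identity.

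There is essentially no genuine obstacle in this argument; the only point worth a moment's attention is that Lemma \ref{lem:3} is invoked all the way down to $k = 0$, which is within its stated range, so that the $k = 0$ term contributes $(1-q^{-1})$ and the number of surviving terms is precisely $[g/2]+1$.
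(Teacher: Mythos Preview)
Your proof is correct and follows essentially the same route as the paper: pull out the constant factor $\sqrt{|Q|}$, use $|l|=q^{n/2}$ to combine the $q$-powers, apply Lemma~\ref{lem:3} to the inner sum, and count the $[g/2]+1$ surviving terms. The only cosmetic difference is that you reindex explicitly via $n=2k$, whereas the paper keeps the variable $n$ restricted to even values.
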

\begin{proof}
\begin{eqnarray}
\sum_{\substack{n=0 \\ 2|n}}^{g}q^{-n/2}\sum_{\substack{l \ \mathrm{monic} \\ \mathrm{deg}(l)=n/2}}\sqrt{|Q|}\frac{\Phi(l)}{|l|} & = & \sqrt{|Q|}\sum_{\substack{n=0 \\ 2|n}}^{g}q^{-n}\sum_{\substack{l \ \mathrm{monic} \\ \mathrm{deg}(l)=n/2}}\Phi(l) \nonumber\\
& = & \sqrt{|Q|}\sum_{\substack{n=0 \\ 2|n}}^{g}(1-q^{-1}), \nonumber
\end{eqnarray}
where we have used Lemma \ref{lem:3} to obtain the last equation.  Hence
$$\sqrt{|Q|}\sum_{\substack{n=0 \\ 2|n}}^{g}(1-q^{-1})=\sqrt{|Q|}(1-q^{-1})\sum_{\substack{n=0 \\ 2|n}}^{g}1,$$
which proves the lemma, since $n$ is even.
\end{proof}
So, from this lemma we can conclude that
\begin{equation}\label{eq:5.20}
\sum_{\substack{n=0 \\ 2|n}}^{g}q^{-n/2}\sum_{\substack{l \ \mathrm{monic} \\ \mathrm{deg}(l)=n/2}}\sqrt{|Q|}\frac{\Phi(l)}{|l|}=O(gq^{g}),
\end{equation}
which is a result that will be of use later.

Using the Euler product formula we can prove the following lemma.

\begin{lem}\label{lem:10}
We have that,
\begin{equation}
\sum_{d \ \mathrm{monic}}\frac{\mu(d)}{|d|}\prod_{P|d}\frac{1}{|P|+1}=\prod_{P}\left(1-\frac{1}{|P|(|P|+1)}\right).
\end{equation}
\end{lem}

There are two additional lemmas which will be important in establishing the formula in Proposition \ref{prop:1}.

\begin{lem}\label{lem:11}
We have that,
\begin{equation}
([g/2]+1)\sum_{\substack{d \ \mathrm{monic} \\ \mathrm{deg}(d)>[g/2]}}\frac{\mu(d)}{|d|}\prod_{P|d}\frac{1}{|P|+1}=O(gq^{-g/2}).
\end{equation}
\end{lem}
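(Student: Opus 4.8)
The plan is to exploit the fact that the prefactor $([g/2]+1)$ grows only polynomially in $g$, so it suffices to show that the tail sum
\[
\sum_{\substack{d\ \mathrm{monic}\\ \mathrm{deg}(d)>[g/2]}}\frac{\mu(d)}{|d|}\prod_{P|d}\frac{1}{|P|+1}
\]
decays like $q^{-g/2}$; multiplying this by $O(g)$ then yields the claimed estimate $O(gq^{-g/2})$.

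First I would discard the non-squarefree $d$, for which $\mu(d)=0$, and bound the summand in absolute value. For squarefree $d$ one has $\prod_{P\mid d}\frac{1}{|P|+1}\le\prod_{P\mid d}\frac{1}{|P|}=\frac{1}{|d|}$, whence $\left|\frac{\mu(d)}{|d|}\prod_{P\mid d}\frac{1}{|P|+1}\right|\le\frac{1}{|d|^{2}}$. Grouping the polynomials $d$ by degree and using that there are exactly $q^{n}$ monic polynomials of each degree $n$, the tail is bounded by
\begin{equation}
\sum_{\substack{d\ \mathrm{monic}\\ \mathrm{deg}(d)>[g/2]}}\frac{1}{|d|^{2}}=\sum_{n>[g/2]}\frac{q^{n}}{q^{2n}}=\sum_{n>[g/2]}q^{-n}=\frac{q^{-[g/2]-1}}{1-q^{-1}}.
\end{equation}
Since $[g/2]+1\ge g/2$, the right-hand side is $\ll q^{-g/2}$, with the implied constant depending only on $q$. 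Combining this with $[g/2]+1=O(g)$ gives the lemma at once.

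As for difficulties, there are essentially none of substance here: the only point one has to get right is the elementary inequality $\frac{1}{|P|+1}\le\frac{1}{|P|}$, which supplies the extra factor $|d|^{-1}$ beyond the $|d|^{-1}$ already present, making the Dirichlet-type series absolutely convergent with a geometrically small tail. (Alternatively one could observe, as in Lemma~\ref{lem:10}, that the full sum $\sum_{d}\frac{\mu(d)}{|d|}\prod_{P|d}\frac{1}{|P|+1}$ converges to a convergent Euler product and estimate the tail as a difference of partial sums, but the direct bound above is cleaner and self-contained.)
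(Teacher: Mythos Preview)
Your proof is correct and follows essentially the same route as the paper: bound the summand by $|d|^{-2}$ via $\tfrac{1}{|P|+1}\le\tfrac{1}{|P|}$ for squarefree $d$, sum the resulting geometric tail to get $\ll q^{-g/2}$, and absorb the prefactor $[g/2]+1=O(g)$. The only cosmetic difference is that the paper writes $\mu^{2}(d)$ where you explicitly say ``discard the non-squarefree $d$''.
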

\begin{proof}
\begin{eqnarray}
\sum_{\substack{d \ \mathrm{monic} \\ \mathrm{deg}(d)>[g/2]}}\frac{\mu(d)}{|d|}\prod_{P|d}\frac{1}{|P|+1} &\leq& \sum_{\substack{d \ \mathrm{monic} \\ \mathrm{deg}(d)>[g/2]}}\frac{\mu^{2}(d)}{|d|}\prod_{P|d}\frac{1}{|P|}\nonumber\\
& \leq & \sum_{\substack{d \ \mathrm{monic} \\ \mathrm{deg}(d)>[g/2]}}|d|^{-2}=\sum_{h>[g/2]}|d|^{-2}\sum_{\substack{d \ \mathrm{monic} \\ \mathrm{deg}(d)=h}}1 \nonumber\\
& = & \sum_{h>[g/2]}q^{-h}\ll q^{-[g/2]}\ll q^{-g/2}.
\end{eqnarray}
So,
$$([g/2]+1)\sum_{\substack{d \ \mathrm{monic} \\ \mathrm{deg}(d)>[g/2]}}\frac{\mu(d)}{|d|}\prod_{P|d}\frac{1}{|P|+1}\ll gq^{-g/2}.$$
\end{proof}

\begin{lem}\label{lem:12}
We have that,
\begin{equation}
\sum_{\substack{d \ \mathrm{monic} \\ \mathrm{deg}(d)>[g/2]}}\frac{\mu(d)}{|d|}\prod_{P|d}\frac{1}{|P|+1}\mathrm{deg}(d)=O(gq^{-g/2}).
\end{equation}
\end{lem}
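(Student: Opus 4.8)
The plan is to mimic the proof of Lemma \ref{lem:11} almost verbatim, carrying the extra factor $\mathrm{deg}(d)$ through the estimate. First I would discard the sign and bound the summand in absolute value: since $\mu(d)=0$ unless $d$ is square--free, and for square--free $d$ one has $\prod_{P|d}\frac{1}{|P|+1}\le\prod_{P|d}\frac{1}{|P|}=\frac{1}{|d|}$, so that
\[
\left|\frac{\mu(d)}{|d|}\prod_{P|d}\frac{1}{|P|+1}\,\mathrm{deg}(d)\right|\le\frac{\mathrm{deg}(d)}{|d|^{2}}.
\]
Then I would group the monic polynomials $d$ by degree $h=\mathrm{deg}(d)$, using that there are exactly $q^{h}$ monic polynomials of degree $h$ and that $|d|^{2}=q^{2h}$, which reduces the left--hand side to a bound by
\[
\sum_{h>[g/2]}\frac{h}{q^{2h}}\cdot q^{h}=\sum_{h>[g/2]}\frac{h}{q^{h}}.
\]

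Next I would estimate the tail $\sum_{h>m}h q^{-h}$ with $m=[g/2]$. Writing $h=m+j$ and factoring out $q^{-m}$ gives $\sum_{j\ge1}(m+j)q^{-m-j}=q^{-m}\bigl(m\sum_{j\ge1}q^{-j}+\sum_{j\ge1}jq^{-j}\bigr)$; since $q\ge3$ both inner series are absolutely bounded by constants depending only on $q$, so the whole expression is $\ll mq^{-m}=[g/2]\,q^{-[g/2]}\ll gq^{-g/2}$. Combining the two displays yields the claimed bound $O(gq^{-g/2})$.

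Since the argument is elementary there is no real obstacle; the only point requiring (mild) care is that the implied constants are allowed to depend on $q$ --- consistent with the convention stated in the introduction --- which is what makes the geometric tail $\sum_{h>m}hq^{-h}\ll mq^{-m}$ legitimate. One could alternatively simply quote the computation in the proof of Lemma \ref{lem:11} and observe that inserting the monotone factor $\mathrm{deg}(d)=h$ changes $\sum_{h>[g/2]}q^{-h}\ll q^{-g/2}$ into $\sum_{h>[g/2]}hq^{-h}\ll gq^{-g/2}$, which is precisely the asserted estimate.
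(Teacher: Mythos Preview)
Your proof is correct and follows essentially the same route as the paper's own argument: bound the summand by $\mathrm{deg}(d)/|d|^{2}$ via $|\mu(d)|\le1$ and $\prod_{P\mid d}(|P|+1)^{-1}\le\prod_{P\mid d}|P|^{-1}$, group by degree to reduce to $\sum_{h>[g/2]}hq^{-h}$, and conclude $\ll gq^{-g/2}$. The only difference is that you spell out the tail estimate $\sum_{h>m}hq^{-h}\ll mq^{-m}$ explicitly, whereas the paper simply asserts it.
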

\begin{proof}
Using the same reasoning as in Lemma \ref{lem:11}
\begin{eqnarray}
\sum_{\substack{d \ \mathrm{monic} \\ \mathrm{deg}(d)>[g/2]}}\frac{\mu(d)}{|d|}\prod_{P|d}\frac{1}{|P|+1}\mathrm{deg}(d) &\leq& \sum_{\substack{d \ \mathrm{monic} \\ \mathrm{deg}(d)>[g/2]}}\frac{\mu^{2}(d)}{|d|}\prod_{P|d}\frac{1}{|P|}\mathrm{deg}(d)\nonumber\\
& = & \sum_{\substack{d \ \mathrm{monic} \\ \mathrm{deg}(d)>[g/2]}}|d|^{-2}\mathrm{deg}(d)=\sum_{h>[g/2]}\sum_{\substack{d \ \mathrm{monic} \\ \mathrm{deg}(d)=h}}hq^{-2h} \nonumber\\
& = & \sum_{h>[g/2]}hq^{-h}\ll [g/2]q^{-[g/2]}\ll gq^{-g/2}.\nonumber
\end{eqnarray}
\end{proof}

Next we establish the following formula.

\begin{prop}\label{prop:3}
We have that,
\begin{multline}
\sum_{d \ \mathrm{monic}}\frac{\mu(d)}{|d|}\prod_{P|d}\frac{1}{|P|+1}\mathrm{deg}(d)\\
=-\prod_{P}\left(1-\frac{1}{|P|(|P|+1)}\right)\sum_{\substack{P \ \mathrm{monic} \\ \mathrm{irreducible}}}\frac{\mathrm{deg}(P)}{|P|(|P|+1)-1}.
\end{multline}
\end{prop}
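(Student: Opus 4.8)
The plan is to introduce a complex variable $s$ and recognise the left--hand side as (a normalisation of) the derivative at $s=1$ of the Euler product appearing in Lemma~\ref{lem:10}. Set
\[
F(s)=\sum_{d \ \mathrm{monic}}\frac{\mu(d)}{|d|^{s}}\prod_{P|d}\frac{1}{|P|+1}.
\]
Since $\mu(d)\neq 0$ forces $d$ to be square--free, for such $d$ one has $\prod_{P|d}|P|=|d|$, so the general term is bounded by $|d|^{-\mathfrak{R}(s)-1}$; summing over the $q^{h}$ monic polynomials of each degree $h$ shows that $F(s)$ converges absolutely, uniformly on compact subsets of $\mathfrak{R}(s)>0$, and hence defines a holomorphic function there. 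Exactly as in the proof of Lemma~\ref{lem:10} (multiplicativity of $d\mapsto|d|^{-s}\prod_{P|d}(|P|+1)^{-1}$ over square--free $d$) one gets the Euler product
\[
F(s)=\prod_{P}\left(1-\frac{1}{|P|^{s}(|P|+1)}\right),\qquad \mathfrak{R}(s)>0,
\]
which at $s=1$ is precisely Lemma~\ref{lem:10}.

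Next I would differentiate in two ways. Because $\frac{d}{ds}|d|^{-s}=-(\log q)\,\mathrm{deg}(d)\,|d|^{-s}$, differentiating the Dirichlet series term by term (legitimate by the uniform convergence just noted) gives
\[
F'(s)=-(\log q)\sum_{d \ \mathrm{monic}}\frac{\mu(d)}{|d|^{s}}\prod_{P|d}\frac{1}{|P|+1}\,\mathrm{deg}(d),
\]
so that the left--hand side of the Proposition equals $-\frac{1}{\log q}F'(1)$. On the other hand, the logarithmic derivative of the Euler product (justified by the absolute convergence for $\mathfrak{R}(s)>0$ of the resulting prime sum, which is dominated by a constant multiple of $\sum_{P}\mathrm{deg}(P)\,|P|^{-1-\mathfrak{R}(s)}$) gives
\[
\frac{F'(s)}{F(s)}=\log q\sum_{\substack{P \ \mathrm{monic}\\ \mathrm{irreducible}}}\mathrm{deg}(P)\,\frac{|P|^{-s}(|P|+1)^{-1}}{1-|P|^{-s}(|P|+1)^{-1}}.
\]

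Finally I would set $s=1$. There the $P$--th summand simplifies, since $\dfrac{|P|^{-1}(|P|+1)^{-1}}{1-|P|^{-1}(|P|+1)^{-1}}=\dfrac{1}{|P|(|P|+1)-1}$, to yield
\[
F'(1)=F(1)\,\log q\sum_{\substack{P \ \mathrm{monic}\\ \mathrm{irreducible}}}\frac{\mathrm{deg}(P)}{|P|(|P|+1)-1};
\]
since $F(1)=\prod_{P}\bigl(1-\tfrac{1}{|P|(|P|+1)}\bigr)$ by Lemma~\ref{lem:10}, dividing by $-\log q$ gives exactly the asserted identity. The only genuine work is the analytic bookkeeping --- absolute and locally uniform convergence on $\mathfrak{R}(s)>0$ of both the Dirichlet series defining $F$ and the prime sum giving $F'/F$, which legitimises the Euler product, term--by--term differentiation, and logarithmic differentiation. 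All of this rests only on the elementary count of $q^{h}$ monic polynomials of degree $h$ and is entirely parallel to the estimates already made in Lemmas~\ref{lem:11} and~\ref{lem:12}; I do not anticipate any obstacle beyond this routine verification.
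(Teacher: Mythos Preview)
Your proposal is correct and follows essentially the same route as the paper: introduce a parameter $s$, identify the desired sum as $-\tfrac{1}{\log q}F'(1)$ for the Dirichlet series $F(s)$ whose Euler product is given by Lemma~\ref{lem:10}, compute $F'/F$ from the product, and evaluate at $s=1$. The only difference is that you supply the analytic bookkeeping (absolute and locally uniform convergence on $\mathfrak{R}(s)>0$) which the paper leaves implicit.
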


\begin{proof}
Let,
\begin{equation}
f(s)=\sum_{d \ \mathrm{monic}}\mathrm{deg}(d)\frac{\mu(d)}{|d|^{s}}\prod_{P|d}\frac{1}{|P|+1}
\end{equation}
and
\begin{equation}
g(s)=\sum_{d \ \mathrm{monic}}\frac{\mu(d)}{|d|^{s}}\prod_{P|d}\frac{1}{|P|+1}.
\end{equation}
A simple calculation shows that
\begin{equation}\label{eq:5.28}
g'(s)=-f(s)\log q
\end{equation}
and by Lemma \ref{lem:10}
\begin{equation}\label{eq:5.29}
g(s)=\prod_{P}\left(1-\frac{1}{|P|^{s}(|P|+1)}\right).
\end{equation}
Computing $g'(s)$ using \eqref{eq:5.29} and the product rule gives us
\begin{equation}\label{eq:5.30}
g'(s)=g(s)\log q\sum_{\substack{P \ \mathrm{monic} \\ \mathrm{irreducible}}}\frac{\mathrm{deg}(P)}{|P|^{s}(|P|+1)-1}.
\end{equation}
Combining \eqref{eq:5.28} and \eqref{eq:5.30} we have that
\begin{equation}
f(s)=-g(s)\sum_{\substack{P \ \mathrm{monic} \\ \mathrm{irreducible}}}\frac{\mathrm{deg}(P)}{|P|^{s}(|P|+1)-1}.
\end{equation}
Putting $s=1$ proves the theorem.
\end{proof}

Now we are ready to give a proof of our main result in this section.
\begin{proof}[Proof of Proposition \ref{prop:1}]
Let
\begin{equation}
B(n,l,D)=\sum_{\substack{n=0 \\ 2|n}}^{g}q^{-n/2}\sum_{\substack{l \ \mathrm{monic} \\ \mathrm{deg}(l)=n/2}}\sum_{\substack{D\in\mathcal{H}_{2g+1,q} \\ (D,l)=1}}1.
\end{equation}
By Proposition \ref{prop:2} we have that
\begin{eqnarray}
B(n,l,D)&=&\frac{|D|}{\zeta_{A}(2)}\sum_{\substack{n=0 \\ 2|n}}^{g}q^{-n/2}\sum_{\substack{l \ \mathrm{monic} \\ \mathrm{deg}(l)=n/2}}\prod_{P|l}(1+|P|^{-1})^{-1}\nonumber\\
&+&O\left(\sum_{\substack{n=0 \\ 2|n}}^{g}q^{-n/2}\sum_{\substack{l \ \mathrm{monic} \\ \mathrm{deg}(l)=n/2}}\sqrt{|D|}\frac{\Phi(l)}{|l|}\right)\nonumber
\end{eqnarray}
and using \eqref{eq:5.20} we can reduce $B(n,l,D)$ to
\begin{equation}
B(n,l,D)=\frac{|D|}{\zeta_{A}(2)}\sum_{\substack{n=0 \\ 2|n}}^{g}q^{-n/2}\sum_{\substack{l \ \mathrm{monic} \\ \mathrm{deg}(l)=n/2}}\prod_{P|l}(1+|P|^{-1})^{-1}+O(gq^{g}).
\end{equation}
Using Lemma \ref{lem:8} we have
\begin{eqnarray}
B(n,l,D) & = & \frac{|D|}{\zeta_{A}(2)}\sum_{\substack{n=0 \\ 2|n}}^{g}q^{-n/2}q^{n/2}\sum_{\substack{d \ \mathrm{monic} \\ \mathrm{deg}(d)\leq n/2}}\frac{\mu(d)}{|d|}\prod_{P|d}\frac{1}{|P|+1}+O(gq^{g})\nonumber\\
& = & \frac{|D|}{\zeta_{A}(2)}\sum_{m=0}^{[g/2]}\sum_{\substack{d \ \mathrm{monic} \\ \mathrm{deg}(d)\leq m}}\frac{\mu(d)}{|d|}\prod_{P|d}\frac{1}{|P|+1}+O(gq^{g})\nonumber\\
& = & \frac{|D|}{\zeta_{A}(2)}\sum_{\substack{d \ \mathrm{monic} \\ \mathrm{deg}(d)\leq [g/2]}}\frac{\mu(d)}{|d|}\prod_{P|d}\frac{1}{|P|+1}\sum_{\mathrm{deg}(d)\leq m\leq[g/2]}1+O(gq^{g})\nonumber\\
& = & \frac{|D|}{\zeta_{A}(2)}\sum_{\substack{d \ \mathrm{monic} \\ \mathrm{deg}(d)\leq [g/2]}}\frac{\mu(d)}{|d|}\prod_{P|d}\frac{1}{|P|+1}([g/2]+1-\mathrm{deg}(d))\nonumber\\
& + & O(gq^{g}).
\end{eqnarray}
Hence,
\begin{eqnarray}
B(n,l,D)& = & \frac{|D|}{\zeta_{A}(2)}\left\{([g/2]+1)\left(\sum_{d \ \mathrm{monic}}\frac{\mu(d)}{|d|}\prod_{P|d}\frac{1}{|P|+1}\right)\right\}\nonumber\\
& - & \frac{|D|}{\zeta_{A}(2)}\left\{([g/2]+1)\left(\sum_{\substack{d \ \mathrm{monic} \\ \mathrm{deg}(d)>[g/2]}}\frac{\mu(d)}{|d|}\prod_{P|d}\frac{1}{|P|+1}\right)\right\}\nonumber\\
& - & \frac{|D|}{\zeta_{A}(2)}\left\{\sum_{d \ \mathrm{monic}}\frac{\mu(d)}{|d|}\prod_{P|d}\frac{1}{|P|+1}\mathrm{deg}(d)\right\}\nonumber\\
& + & \frac{|D|}{\zeta_{A}(2)}\left\{\sum_{\substack{d \ \mathrm{monic} \\ \mathrm{deg}(d)>[g/2]}}\frac{\mu(d)}{|d|}\prod_{P|d}\frac{1}{|P|+1}\mathrm{deg}(d)\right\}+O(gq^{g}).
\end{eqnarray}
Combining Lemmas \ref{lem:10},\ref{lem:11},\ref{lem:12} and Proposition \ref{prop:3} we have,
\begin{eqnarray}
B(n,l,D) & = & \frac{|D|}{\zeta_{A}(2)}([g/2]+1)P(1)+\frac{|D|}{\zeta_{A}(2)}P(1)\sum_{\substack{P \ \mathrm{monic} \\ \mathrm{irreducible}}}\frac{\mathrm{deg}(P)}{|P|(|P|+1)-1}+O(gq^{\frac{3}{2}g})\nonumber\\
& = & \frac{P(1)}{\zeta_{A}(2)}|D|\left\{([g/2]+1)+\sum_{\substack{P \ \mathrm{monic} \\ \mathrm{irreducible}}}\frac{\mathrm{deg}(P)}{|P|(|P|+1)-1}\right\}+O(gq^{\frac{3}{2}g}),\nonumber
\end{eqnarray}
which completes the proof of the proposition.
\end{proof}

\section{Estimating the Contributions of Non--Squares to the Average.}
We will present in this section an estimate for the second term of \eqref{eq:4.3} which allows us to give an asymptotic formula for the first term of \eqref{eq:4.2} where $q\equiv 1\pmod{4}$ is fixed and $g\rightarrow\infty$. Our main result in this section is:
\begin{prop}\label{prop:4}
We have,
\begin{equation}
\sum_{n=0}^{g}q^{-n/2}\sum_{\substack{f \ \mathrm{monic} \\ \mathrm{deg}(f)=n \\ f\neq\Box}}\sum_{D\in\mathcal{H}_{2g+1,q}}\chi_{D}(f)=O\left(2^{g}q^{\frac{3}{2}g+\frac{3}{4}}\right).
\end{equation}
\end{prop}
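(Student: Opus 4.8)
The plan is to interchange the order of summation so that the inner sum becomes an average of the quadratic character $\chi_D(f)$ over the hyperelliptic ensemble, and then to bound this character sum using the sieving identity \eqref{eq:3.20} together with quadratic reciprocity and the analogue of the P\'olya--Vinogradov inequality for $\mathbb{F}_q[x]$. First I would write, for fixed monic non-square $f$ of degree $n\le g$,
\[
\sum_{D\in\mathcal{H}_{2g+1,q}}\chi_D(f)=\sum_{2\alpha+\beta=2g+1}\sum_{\substack{A \ \mathrm{monic}\\ \mathrm{deg}A=\alpha}}\mu(A)\sum_{\substack{B \ \mathrm{monic}\\ \mathrm{deg}B=\beta}}\chi_{A^2B}(f),
\]
and since $\chi_{A^2B}(f)=\left(\tfrac{A^2B}{f}\right)=\left(\tfrac{A}{f}\right)^2\left(\tfrac{B}{f}\right)$, the terms with $(A,f)\neq 1$ vanish, while for $(A,f)=1$ the factor $\left(\tfrac{A}{f}\right)^2$ equals $1$. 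So the inner-most sum is $\sum_{\mathrm{deg}B=\beta}\left(\tfrac{B}{f}\right)=\sum_{\mathrm{deg}B=\beta}\chi_f(B)$ up to a sign coming from reciprocity (here the hypothesis $q\equiv 1\pmod 4$ ensures the reciprocity sign is trivial, which is why it appears in the proposition).

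Next I would invoke the key fact that since $f$ is a non-square of positive degree, $\chi_f$ is a non-principal character, and hence by \cite[Proposition~4.3]{Ro} (the same fact quoted earlier in the excerpt, that $A_D(n)=0$ for $n\ge\deg D$) the complete sum $\sum_{\mathrm{deg}B=\beta}\chi_f(B)$ vanishes once $\beta\ge\deg f=n$; for the remaining range $\beta<n$ one has the trivial bound $\bigl|\sum_{\mathrm{deg}B=\beta}\chi_f(B)\bigr|\le q^\beta$. Summing $\mu(A)$ trivially over $\mathrm{deg}A=\alpha$ costs a factor $q^\alpha$, and with $2\alpha+\beta=2g+1$, $\beta\le n-1$, this yields
\[
\Bigl|\sum_{D\in\mathcal{H}_{2g+1,q}}\chi_D(f)\Bigr|\ll \sum_{\substack{2\alpha+\beta=2g+1\\ \beta\le n-1}} q^{\alpha}\cdot q^{\beta}
\ll q^{g+\frac12}\cdot q^{(n-1)/2}\cdot C,
\]
after summing the geometric series in $\beta$; here I am using that the dominant term has $\beta$ as large as allowed, $\beta=n-1$, so $\alpha=(2g+2-n)/2=g+1-n/2$, giving $q^{\alpha+\beta}=q^{g+1-n/2+n-1}=q^{g+n/2}$.

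Finally I would assemble the outer sum over $n$ and over the $f$'s. There are at most $q^n$ monic polynomials $f$ of degree $n$, and the weight is $q^{-n/2}$, so
\[
\sum_{n=0}^{g}q^{-n/2}\sum_{\substack{f \ \mathrm{monic}\\ \mathrm{deg}(f)=n\\ f\neq\Box}}\Bigl|\sum_{D\in\mathcal{H}_{2g+1,q}}\chi_D(f)\Bigr|
\ll \sum_{n=0}^{g} q^{-n/2}\cdot q^{n}\cdot q^{g+n/2}
= q^{g}\sum_{n=0}^{g}q^{n}\ll q^{2g},
\]
which is far stronger than claimed — so in fact the honest argument must be more delicate, and the real point (and the main obstacle) is that one cannot afford to bound $\sum_{\mathrm{deg}B=\beta}\chi_f(B)$ trivially by $q^\beta$ for $\beta<n$: that destroys all cancellation and gives the weak $q^{2g}$ rather than $2^g q^{3g/2+3/4}$. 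Instead I would keep the character sum intact, exchange the roles of $B$ and $f$, and detect the square-free condition on $f$ or bound $\#\{f\neq\Box:\deg f=n, \text{$f$ appears}\}$ more carefully; the factor $2^g$ in the target strongly suggests that the genuine bottleneck is a sum of the shape $\sum_{\mathrm{deg}f=n}\left|\sum_{\mathrm{deg}B=\beta}\chi_f(B)\right|$ which one controls by opening the square and using the large sieve / orthogonality for $\mathbb{F}_q[x]$, where the $2^g$ arises from a divisor-type bound $\sum_{\deg h\le g}1 \asymp \binom{\text{something}}{\cdot}$ or from counting squarefree kernels. The hardest step, then, is obtaining square-root cancellation on average in $f$ (not pointwise), i.e. a function-field large sieve inequality of the form $\sum_{\deg f\le n}\bigl|\sum_{\deg B=\beta}\chi_f(B)\bigr|^2 \ll (q^n+q^\beta)q^\beta$ up to $\log$-type ($2^g$) losses, and then feeding that into Cauchy--Schwarz against the outer weights $q^{-n/2}$ to land on $O(2^g q^{3g/2+3/4})$.
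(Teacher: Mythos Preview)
Your sieve decomposition and the use of reciprocity to flip $\chi_D(f)$ into a character sum in $B$ modulo $f$ are exactly how the paper proceeds, and the observation that $\sum_{\deg B=\beta}(B/f)$ vanishes for $\beta\ge\deg f$ is correct. The gap is in the sentence ``which is far stronger than claimed'': it is not. Since $q\equiv 1\pmod 4$ forces $q\ge 5$, one has $\sqrt{q}>2$, so $q^{g/2}\gg 2^{g}$ and hence $q^{2g}\gg 2^{g}q^{3g/2+3/4}$. Your trivial bound $|\sum_{\deg B=\beta}(B/f)|\le q^{\beta}$ therefore gives an estimate \emph{weaker} than the proposition asserts, and you have not proved it.

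The repair is much simpler than the large-sieve/Cauchy--Schwarz route you sketch at the end. Since $f$ is not a square, $(\cdot/f)$ is a nontrivial character, and the Riemann Hypothesis for curves (applied to $\mathcal{L}(u,\chi)=\prod_{j}(1-\alpha_{j}u)$ with $|\alpha_{j}|\le\sqrt{q}$) gives the pointwise Weil bound
\[
\Bigl|\sum_{\deg B=\beta}\Bigl(\tfrac{B}{f}\Bigr)\Bigr|\le\binom{\deg f-1}{\beta}\,q^{\beta/2}.
\]
Feeding this into your sieve, the count $q^{\alpha}$ of monic $A$ with $\deg A=\alpha=(2g+1-\beta)/2$ exactly cancels the $q^{\beta/2}$ saving, leaving $\bigl|\sum_{D\in\mathcal{H}_{2g+1,q}}\chi_D(f)\bigr|\le q^{g+1/2}\sum_{\beta}\binom{\deg f-1}{\beta}\le 2^{\deg f-1}q^{g+1/2}$. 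Summing this trivially over the $q^{n}$ monic $f$ of degree $n$ with weight $q^{-n/2}$ then yields $\sum_{n\le g}q^{n/2}2^{n-1}q^{g+1/2}\ll q^{g}(2\sqrt{q})^{g+1}$, which is the stated bound. The factor $2^{g}$ is thus nothing more than $\sum_{\beta}\binom{n-1}{\beta}=2^{n-1}$ at $n\sim g$; no averaging over $f$, divisor bounds, or large sieve is needed.
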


For this we will need the following lemmas (c.f.~\cite{FR}):

\begin{lem}\label{lem:13}
Let $\chi$ be a nontrivial Dirichlet character modulo $f$. Then for $n<\mathrm{deg}(f)$,
\begin{equation}
\left|\sum_{\mathrm{deg}(B)=n}\chi(B)\right|\leq\binom{\mathrm{deg}(f)-1}{n}q^{n/2}
\end{equation}
(the sum over all monic polynomials of degree $n$).
\end{lem}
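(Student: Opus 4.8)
The plan is to establish Lemma~\ref{lem:13} via a contour-integral (or generating-function) argument for the character sum, exploiting the fact that the associated $L$-function is a polynomial. First I would recall that for a nontrivial Dirichlet character $\chi$ modulo $f$, the $L$-function
$$
L(u,\chi)=\sum_{B \ \mathrm{monic}}\chi(B)u^{\deg B}=\prod_{\substack{P \ \mathrm{monic}\\ \mathrm{irreducible}}}\bigl(1-\chi(P)u^{\deg P}\bigr)^{-1}
$$
is in fact a polynomial in $u$ of degree at most $\deg(f)-1$ (this is the analogue of \cite[Proposition 4.3]{Ro}, the same fact already invoked in the excerpt for the case $\chi=\chi_{D}$ with $D$ a non-square), and that by the Riemann Hypothesis for curves / Weil's theorem its inverse roots all have absolute value $1$ or $\sqrt{q}$. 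Writing $L(u,\chi)=\prod_{j}(1-\gamma_{j}u)$ with $|\gamma_{j}|\le \sqrt{q}$ and at most $\deg(f)-1$ factors, the coefficient of $u^{n}$ is $\sum_{\deg B=n}\chi(B)=(-1)^{n}e_{n}(\gamma_{1},\dots,\gamma_{N})$, the $n$-th elementary symmetric function of the inverse roots.

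The key step is then the elementary bound on elementary symmetric polynomials: if $N\le \deg(f)-1$ and each $|\gamma_{j}|\le\sqrt{q}$, then
$$
|e_{n}(\gamma_{1},\dots,\gamma_{N})|\le \binom{N}{n} q^{n/2}\le \binom{\deg(f)-1}{n} q^{n/2},
$$
since $e_{n}$ is a sum of $\binom{N}{n}$ products of $n$ of the $\gamma_{j}$'s, each product bounded by $q^{n/2}$, and $\binom{N}{n}$ is monotone in $N$ for $n$ fixed. This immediately gives the claimed inequality. Alternatively, one can extract the coefficient by Cauchy's formula, $\sum_{\deg B=n}\chi(B)=\frac{1}{2\pi i}\oint_{|u|=r} L(u,\chi)\,u^{-n-1}\,du$, and bound $|L(u,\chi)|$ on a well-chosen circle using the product over its $\le\deg(f)-1$ roots on $|u|=q^{-1/2}$; optimizing $r$ recovers the same binomial-times-$q^{n/2}$ shape. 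I would present the symmetric-function version as it is cleaner and avoids choosing a radius.

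The one genuine point requiring care — and the step I expect to be the main obstacle — is justifying that $L(u,\chi)$ is a \emph{polynomial} of degree at most $\deg(f)-1$ with all inverse roots of modulus at most $\sqrt q$, uniformly for an arbitrary nontrivial (not necessarily primitive, not necessarily quadratic) character modulo $f$. For a primitive character this is Weil's theorem for the corresponding curve/covering; for an imprimitive $\chi$ one factors $L(u,\chi)=L(u,\chi^{*})\prod_{P\mid f,\,P\nmid f^{*}}(1-\chi^{*}(P)u^{\deg P})$ where $\chi^{*}$ is the primitive character inducing $\chi$ and $f^{*}$ its conductor, so the extra finite Euler factors contribute inverse roots on $|u|=1$ (modulus $1\le\sqrt q$) and only increase the degree up to $\deg(f)-1$; the polynomiality and the root bound are preserved. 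With that structural input in hand the remainder is the routine symmetric-function estimate above, so the proof is short.
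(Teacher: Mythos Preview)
Your proposal is correct and is essentially the same argument as the paper's: write $L(u,\chi)=\prod_{j}(1-\gamma_{j}u)$ as a polynomial of degree at most $\deg(f)-1$, identify the $n$th coefficient as $(-1)^{n}e_{n}(\gamma_{1},\dots)$, and bound it by $\binom{\deg(f)-1}{n}q^{n/2}$ using $|\gamma_{j}|\le\sqrt{q}$. Your discussion of the imprimitive case (extra Euler factors contributing inverse roots of modulus $1$) is in fact more careful than the paper's, which simply asserts the product form with $|\alpha_{j}|\le\sqrt{q}$ without separating primitive from imprimitive characters.
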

\begin{proof}
This is straightforward from the Riemann Hypothesis for function fields. All we need to do is compare the series expansion of $\mathcal{L}(u,\chi)$, which is a polynomial of degree at most $\mathrm{deg}(f)-1$, with the expression in terms of the inverse zeros:
$$\sum_{0\leq n<\mathrm{deg}(f)}\left(\sum_{\mathrm{deg}(B)=n}\chi(B)\right)u^{n}=\prod_{j=1}^{\mathrm{deg}(f)-1}(1-\alpha_{j}u)$$
to get
$$\sum_{\mathrm{deg}(B)=n}\chi(B)=(-1)^{n}\sum_{\substack{S\subset\{1,\ldots,\mathrm{deg}(f)-1\} \\ \#S=n}}\prod_{j\in S}\alpha_{j}$$
and then use $|\alpha_{j}|\leq\sqrt{q}$ for all $j$.
\end{proof}

\begin{rmk}
Note that for $n\geq\mathrm{deg}(f)$ the character sum vanishes.
\end{rmk}

Now we apply this result to quadratic characters.
\begin{lem}\label{lem:14}
If $f\in\mathbb{F}_{q}[x]$ is not a square then
\begin{equation}
\left|\sum_{D\in\mathcal{H}_{2g+1,q}}\chi_{D}(f)\right|\ll q^{g+1/2}2^{\mathrm{deg}f-1}.
\end{equation}
\end{lem}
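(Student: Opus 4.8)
\textbf{Proof plan for Lemma \ref{lem:14}.}

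The plan is to reduce the character sum over square-free $D$ of degree $2g+1$ to a sum over \emph{all} monic $D$ of that degree (and smaller degrees), and then invoke the character-sum bound of Lemma \ref{lem:13}. First I would use the Möbius sieve for square-freeness, writing
\begin{equation}
\sum_{D\in\mathcal{H}_{2g+1,q}}\chi_{D}(f)=\sum_{\substack{D \ \mathrm{monic} \\ \mathrm{deg}(D)=2g+1}}\chi_{D}(f)\sum_{A^{2}\mid D}\mu(A)=\sum_{\substack{A \ \mathrm{monic} \\ \mathrm{deg}(A)\leq g}}\mu(A)\sum_{\substack{D' \ \mathrm{monic} \\ \mathrm{deg}(D')=2g+1-2\deg A}}\chi_{A^{2}D'}(f),\nonumber
\end{equation}
where I have set $D=A^{2}D'$. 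Next, since $\chi_{A^{2}D'}(f)=\left(\tfrac{A^{2}D'}{f}\right)=\left(\tfrac{A}{f}\right)^{2}\left(\tfrac{D'}{f}\right)$, the factor $\left(\tfrac{A}{f}\right)^{2}$ equals $1$ when $(A,f)=1$ and $0$ otherwise, so only $A$ coprime to $f$ survive and the inner sum becomes $\sum_{D'}\left(\tfrac{D'}{f}\right)=\sum_{D'}\chi_f(D')$, where $\chi_f(\cdot)=\left(\tfrac{\cdot}{f}\right)$ is a character modulo $f$. Because $f$ is not a square, $\chi_f$ is a nontrivial character modulo $f$ (this uses quadratic reciprocity / the structure of the residue symbol, as set up in the Preliminaries), so Lemma \ref{lem:13} applies.

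The main step is then to bound the inner sum. For each $A$ with $\deg A=\alpha\le g$, the inner sum is over monic $D'$ of degree $m:=2g+1-2\alpha$, which is positive; if $m\ge\deg f$ the remark after Lemma \ref{lem:13} gives that this sum vanishes, and otherwise Lemma \ref{lem:13} gives the bound $\binom{\deg f-1}{m}q^{m/2}$. In either case the contribution is at most $\binom{\deg f -1}{m}q^{m/2}\le 2^{\deg f-1}q^{m/2}$, using $\binom{N}{k}\le 2^{N}$ (with the convention that the binomial coefficient is $0$ for $m\ge \deg f$). Summing over $A$ with $\deg A=\alpha$, of which there are $q^{\alpha}$, and noting $q^{\alpha}\cdot q^{m/2}=q^{\alpha}q^{(2g+1-2\alpha)/2}=q^{g+1/2}$ is independent of $\alpha$, I get
\begin{equation}
\Bigl|\sum_{D\in\mathcal{H}_{2g+1,q}}\chi_{D}(f)\Bigr|\le \sum_{\alpha=0}^{g}q^{\alpha}\,2^{\deg f-1}q^{(2g+1-2\alpha)/2}=(g+1)\,2^{\deg f-1}q^{g+1/2}.\nonumber
\end{equation}
The extra factor $g+1$ is absorbed into the $\ll$ notation (with implied constant allowed to depend on $q$, as the paper notes), yielding $\ll q^{g+1/2}2^{\deg f-1}$ as claimed.

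The only genuine obstacle is the bookkeeping that $\chi_f$ really is a nontrivial character modulo $f$ whenever $f$ is a non-square — one must be slightly careful because $f$ need not be square-free or prime, so one should argue at the level of the residue symbol $\left(\tfrac{\cdot}{f}\right)$ (e.g. pick a prime power $P^e\,\|\,f$ with $e$ odd, or with $P\nmid$ a suitable argument) rather than quoting primitivity. Everything else is the routine Möbius-sieve-plus-triangle-inequality calculation above, and the key quantitative input is entirely supplied by Lemma \ref{lem:13}.
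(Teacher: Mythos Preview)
Your approach is exactly the paper's: M\"obius sieve to remove the square-free condition, then apply Lemma~\ref{lem:13} to the inner sum. However, your final step contains a genuine error. You bound each binomial coefficient by $2^{\deg f-1}$ and then sum over $\alpha=0,\ldots,g$, producing an extra factor of $g+1$, which you then claim can be ``absorbed into the $\ll$ notation.'' It cannot: the implied constants in this paper are allowed to depend on $q$, but $g$ is the parameter tending to infinity, so $(g+1)\,2^{\deg f-1}q^{g+1/2}$ is \emph{not} $\ll 2^{\deg f-1}q^{g+1/2}$. As stated, you have only proved the lemma with an extra factor of $g$.

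The fix is immediate and is what the paper does: keep the binomial coefficients and sum them. After counting the $q^{\alpha}$ monic $A$ of degree $\alpha$ and cancelling $q^{\alpha}\cdot q^{g+1/2-\alpha}=q^{g+1/2}$, you have
\[
\Bigl|\sum_{D\in\mathcal{H}_{2g+1,q}}\chi_{D}(f)\Bigr|\le q^{g+1/2}\sum_{\alpha=0}^{g}\binom{\deg f-1}{2g+1-2\alpha}.
\]
The indices $2g+1-2\alpha$ are distinct odd integers as $\alpha$ varies, so this last sum is bounded by $\sum_{k}\binom{\deg f-1}{k}=2^{\deg f-1}$, giving the lemma with implied constant~$1$. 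Everything else in your write-up (including the remark that nontriviality of $\left(\tfrac{\cdot}{f}\right)$ for non-square $f$ requires picking a prime dividing $f$ to odd order) is fine.
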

\begin{proof}
We use \eqref{eq:3.20} to pick out the square--free monic polynomials. Thus the sum over all square--free monic polynomials is given by
\begin{eqnarray}
\sum_{D\in\mathcal{H}_{2g+1,q}}\chi_{D}(f) & = & \sum_{\mathrm{deg}(D)=2g+1}\sum_{A^{2}|D}\mu(A)\left(\frac{D}{f}\right)\nonumber\\
& = & \sum_{\mathrm{deg}(A)\leq g}\mu(A)\left(\frac{A}{f}\right)^{2}\sum_{\mathrm{deg}(B)=2g+1-2\mathrm{deg}(A)}\left(\frac{B}{f}\right).
\end{eqnarray}
To deal with the inner sum, note that $(\bullet/f)$ is a nontrivial character since $f$ is not a square, so we can use Lemma \ref{lem:13} to obtain
$$\left|\sum_{\mathrm{deg}(B)=2g+1-2\mathrm{deg}(A)}\left(\frac{B}{f}\right)\right|\leq\binom{\mathrm{deg}(f)-1}{2g+1-2\mathrm{deg}(A)}q^{g+1/2-\mathrm{deg}(A)}$$
if $2g+1-2\mathrm{deg}(A)<\mathrm{deg}(f)$. The sum is zero otherwise. Hence we have
\begin{eqnarray}
\left|\sum_{D\in\mathcal{H}_{2g+1,q}}\chi_{D}(f)\right|& \leq & \sum_{\mathrm{deg}(A)\leq g}\left|\sum_{\mathrm{deg}(B)=2g+1-2\mathrm{deg}(A)}\left(\frac{B}{f}\right)\right|\nonumber\\
& \leq & \sum_{g+1/2-(\mathrm{deg}(f)/2)<\mathrm{deg}(A)\leq g}\binom{\mathrm{deg}(f)-1}{2g+1-2\mathrm{deg}(A)}q^{g+1/2-\mathrm{deg}(A)}\nonumber\\
& = & q^{g+1/2}\sum_{g+1/2-(\mathrm{deg}(f)/2)<j\leq g}\binom{\mathrm{deg}(f)-1}{2g+1-2j}\leq2^{\mathrm{deg}(f)-1}q^{g+1/2}.
\end{eqnarray}
This completes the proof of Lemma \ref{lem:14}.
\end{proof}
\begin{proof}[Proof of Proposition \ref{prop:4}]
Using Lemma \ref{lem:14} we have that,
\begin{eqnarray}
\sum_{n=0}^{g}q^{-n/2}\sum_{\substack{f \ \mathrm{monic} \\ \mathrm{deg}(f)=n \\ f\neq\Box}}\sum_{D\in\mathcal{H}_{2g+1,q}}\chi_{D}(f) & \leq & \sum_{n=0}^{g}q^{-n/2}\sum_{\substack{f \ \mathrm{monic} \\ \mathrm{deg}(f)=n}}2^{\mathrm{deg}f-1}q^{g+1/2}\nonumber\\
& = & \sum_{n=0}^{g}q^{-n/2}2^{n-1}q^{g+1/2}q^{n}\nonumber\\
& \ll & q^{g}\sum_{n=0}^{g}(q^{1/2}2)^{n}\nonumber\\
& \ll & q^{g}(2q^{1/2})^{g+1}\nonumber\\
& \ll & q^{\frac{3}{2}g+\frac{3}{4}}2^{g}.
\end{eqnarray}
\end{proof}

\section{Proof of the Main Theorem}
\begin{proof}[Proof of Theorem \ref{thm:mainthm}]
Now we are in a position to prove Theorem \ref{thm:mainthm}. For this we make use of Proposition \ref{prop:1} and Proposition \ref{prop:4}, which give us
\begin{multline}\label{eq:7.1}
\sum_{D\in\mathcal{H}_{2g+1,q}}\sum_{n=0}^{g}\sum_{\substack{f_{1} \ \mathrm{monic} \\ \mathrm{deg}(f_{1})=n}}\chi_{D}(f_{1})q^{-n/2}\\
=\frac{P(1)}{\zeta_{A}(2)}|D|\left\{([g/2]+1)+\sum_{P}\frac{\mathrm{deg}(P)}{|P|(|P|+1)-1}\right\}+O\left(2^{g}q^{\frac{3}{2}g+\frac{3}{4}}\right).
\end{multline}
For the dual sum in \eqref{eq:4.2} we get, similarly, that
\begin{multline}\label{eq:7.2}
\sum_{D\in\mathcal{H}_{2g+1,q}}\sum_{m=0}^{g-1}\sum_{\substack{f_{2} \ \mathrm{monic} \\ \mathrm{deg}(f_{2})=m}}\chi_{D}(f_{2})q^{-m/2}\\
=\frac{P(1)}{\zeta_{A}(2)}|D|\left\{\left(\left[\frac{g-1}{2}\right]+1\right)+\sum_{P}\frac{\mathrm{deg}(P)}{|P|(|P|+1)-1}\right\}+O\left(2^{g}q^{\frac{3}{2}g+\frac{3}{4}}\right).
\end{multline}
So adding \eqref{eq:7.1} with \eqref{eq:7.2} we see that,
\begin{multline}
\sum_{D\in\mathcal{H}_{2g+1,q}}\mathcal{L}(q^{-1/2},\chi_{D})\\
=\frac{P(1)}{2\zeta_{A}(2)}|D|\left\{\log_{q}|D|+1+4\sum_{P}\frac{\mathrm{deg}(P)}{|P|(|P|+1)-1}\right\}+O\left(2^{g}q^{\frac{3}{2}g+\frac{3}{4}}\right)
\end{multline}
and using the fact that $|D|=q^{2g+1}$ we have precisely the statement of Theorem \ref{thm:mainthm}.
\end{proof}
The corollary is immediate using \eqref{eq:3.17} and computing the limit as $g\rightarrow\infty$.

\section{Acknowledgments}
We would like to thank Professor Brian Conrey for pointing out the reference \cite{J}. The authors also wish to thank Chantal David, P\"{a}r Kurlberg, Ze\'{e}v Rudnick, Nina Snaith and Trevor Wooley for helpful and interesting discussions. 

The authors also wish to thank an anonymous referee for his comments, which were so helpful in improve the original manuscript.

\end{document}